\documentclass[12pt,reqno]{amsart}
\addtolength{\textwidth}{2cm} \addtolength{\hoffset}{-1cm}
\addtolength{\textheight}{2cm} \addtolength{\voffset}{-1cm}

\usepackage{amsthm}
\usepackage{amscd}
\usepackage{amsfonts}
\usepackage{amssymb}
\usepackage{amsgen}
\usepackage{amsmath}
\usepackage{amsopn}
\usepackage{url}

\theoremstyle{plain}
\newtheorem{thm}{Theorem}[section]
\newtheorem{lem}[thm]{Lemma}
\newtheorem{prop}[thm]{Proposition}
\newtheorem{cor}[thm]{Corollary}
\newtheorem{conj}[thm]{Conjecture}
\theoremstyle{definition}

\newtheorem{eg}[thm]{Example}

 \newcommand{\nc}{\newcommand}

 \nc{\frakP}{{\mathfrak P}}
 \nc{\Z}{{\mathbb Z}}
 \nc{\R}{{\mathbb R}}
 \nc{\N}{{\mathbb N}}
 \nc{\ZN}{{{\mathbb N}_0}}
 \nc{\Q}{{\mathbb Q}}
 \nc{\CC}{{\mathbb C}}

 \nc{\calA}{{\mathcal A}}
 \nc{\calB}{{\mathcal B}}
 \nc{\calH}{{\mathcal H}}
 \nc{\calP}{{\mathcal P}}
 \nc{\calO}{{\mathcal O}}
 \nc{\gam}{{\gamma}}
 \nc{\gG}{{\Gamma}}
 \nc{\om}{{\omega}}
 \nc{\vep}{{\varepsilon}}
 \nc{\ga}{{\alpha}}
 \nc{\gl}{{\lambda}}
 \nc{\gb}{{\beta}}
 \nc{\gd}{{\delta}}
 \nc{\bfk}{{\bf k}}
 \nc{\bfl}{{\bf l}}
 \nc{\bfb}{{\bf b}}
 \nc{\bfs}{{\bf s}}
 \nc{\bft}{{\bf t}}
 \nc{\gs}{{\sigma}}
 \nc{\gth}{{\theta}}
 \nc{\gS}{{\Sigma}}
 \nc{\gk}{{\kappa}}
  \nc{\gz}{{\zeta}}
 \nc{\tgz}{{\tilde{\zeta}}}
 \nc{\gO}{{\Omega}}
 \nc{\sif}{{\mathcal S}}
 \nc{\gt}{{\tau}}
 \nc{\Lra}{\Longrightarrow}
 \nc{\lra}{\longrightarrow}
 \nc{\lmaps}{\longmapsto}
 \nc{\fS}{{\mathfrak S}}
 \nc{\DD}{{\mathfrak D}}
 \nc{\Llra}{\Longleftrightarrow}
 \nc{\ol}{\overline}
 \nc{\ola}{\overleftarrow}
 \nc{\lms}{\longmapsto}
 \nc{\cv}{{{\mathsf c}{\mathsf v}}}
 \nc{\zq}{{\zeta_q}}
 \nc\qup{{q\uparrow 1}}
 \nc{\us}{\underset}
 \nc{\tn}{{\tilde{n}}}
 \nc{\gD}{{\Delta}}
 \nc{\bi}{{\bf i}}
 \nc{\bfone}{{\bf 1}}

\DeclareMathOperator{\hht}{{ht}}

\DeclareMathOperator{\Orb}{{Orb}}
\DeclareMathOperator{\DW}{{DW}}
\DeclareMathOperator{\Stab}{{Stab}}

\begin{document}
\title[Super Congruences Involving AMHS]{Super Congruences Involving Multiple Harmonic Sums and Bernoulli Numbers}

\author{Kevin Chen and Jianqiang Zhao}
\address{Department of Mathematics, The Bishop's School, La Jolla, CA 92037}
\date{}

\email{chenk@bishops.com}\email{zhaoj@ihes.fr}

\subjclass[2010]{11A07, 11B68}

\keywords{Multiple harmonic sums, finite multiple zeta values, Bernoulli numbers, super congruences}

\maketitle
\allowdisplaybreaks

\begin{abstract}
Let $m$, $r$ and $n$ be positive integers. We denote by $\bfk\vdash n$
any tuple of odd positive integers $\bfk=(k_1,\dots,k_t)$ such that
$k_1+\dots+k_t=n$ and $k_j\ge 3$ for all $j$. In this paper we prove that
for every sufficiently large prime $p$
$$
\sum_{\substack{l_1+l_2+\cdots+l_n=mp^r\\ p\nmid l_1 l_2 \cdots l_n }}
\frac1{l_1l_2\cdots l_n} \equiv p^{r-1} \sum_{\bfk\vdash n} C_{m,\bfk} B_{p-\bfk} \pmod{p^r}
$$
where $B_{p-\bfk}=B_{p-k_1}B_{p-k_2}\cdots B_{p-k_t}$ are products of Bernoulli numbers
and the coefficients $C_{m,\bfk}$ are polynomials of $m$ independent of $p$ and $r$.
This generalizes previous results by many different authors and
confirms a conjecture by the authors and their collaborators.
\end{abstract}

\section{Introduction}
The Bernoulli numbers, defined by the generating series
$$\frac{t}{e^t-1}=\sum_{k=0}^\infty B_k\frac{t^k}{k!},$$
have a long and intriguing history in the study of number theory,
with over 3000 related papers written so far according to the online
Bernoulli Number archive  maintained by Dilcher and Slavutskii \cite{DilcherSl}.
In modern mathematics, the Bernoulli numbers
have appeared in the Euler-Maclaurin summation formula, Herbrand's Theorem
concerning the class group of cyclotomic number fields, and even
the Kervaire--Milnor formula in topology.

Well-documented history indicates that Jakob Bernoulli, after whom the Bernoulli numbers are named, was very proud of his discovery that sums of powers of positive integers can be quickly calculated by using these numbers. This result was independently discovered by Seki around the same time \cite{AIK}. By using Fermat's Little Theorem, the formula further leads to many congruences and even super congruences involving multiple harmonic sums, which were first studied independently by the second author in \cite{Zhao2008a,Zhao2011c} and Hoffman in \cite{Hoffman2015}.
See \cite[Ch.~8]{Zhao2015a} for more details.

Let $\N$ and $\N_0$ be the set of positive integers and nonnegative integers, respectively.
For any $n,d\in\N$ and $\bfs=(s_1,\dots,s_d)\in\N^d$ we define
the \emph{multiple harmonic sums} (MHSs) and their $p$-restricted version for primes $p$ by
\begin{align*}
\calH_n(\bfs):= \sum_{0<k_1<\cdot<k_d<n} \frac{1}{k_1^{s_1}\dots k_d^{s_d}},\quad
\calH_n^{(p)}(\bfs):= \sum_{\substack{0<k_1<\cdot<k_d<n\\ p\nmid k_1,\dots, p\nmid k_d}} \frac{1}{k_1^{s_1}\dots k_d^{s_d}}.
\end{align*}
Here, $d$ is called the depth and $|\bfs|:=s_1+\dots+s_d$ the weight of the MHS.
For example, $\calH_{n+1}(1)$ is often called the $n$th harmonic number.
In general, as $n\to \infty$ we see that $\calH_n(\bfs)\to \zeta(\bfs)$ which are
the multiple zeta values (MZVs).

More than a decade ago, the second author discovered the curious congruence (see \cite{Zhao2007b})
\begin{equation}\label{equ:BaseCongruence}
  \sum_{\substack{i+j+k=p\\ i,j,k>0}} \frac1{ijk} \equiv -2 B_{p-3} \pmod{p}
\end{equation}
for all primes $p\ge 3$. Since then several different types of generalizations
have been found, see, for e.g.
\cite{MTWZ,ShenCai2012b,Wang2014b,WangCa2014,XiaCa2010,Zhao2014,ZhouCa2007}.
In this paper, we will concentrate on congruences of the following type of sums.
Let $\calP_p$ be the set of positive integers not divisible by $p$. For
all positive integers $r$ and $m$ such that $p\nmid m$, define
\begin{align*}
R_n^{(m)}(p^r):=&\,\sum_{\substack{l_1+l_2+\dots+l_n=mp^r\\ l_1,\dots,l_n\in \calP_p }}
    \frac{1}{l_1 l_2\dots l_n} ,\\
S_n^{(m)} (p^r):=&\, \sum_{\substack{l_1+l_2+\dots+l_n=mp^r\\ p^r>l_1,\dots,l_n\in \calP_p}}
    \frac{1}{l_1 l_2\dots l_n}.
\end{align*}

To put these sums into proper framework, we now recall briefly the definition of
the finite MZVs.
Let $\frakP$ be the set of rational primes. To study the congruences of MHSs,
Kaneko and Zagier \cite{KanekoZa2013} consider the following
ring structure\footnote{More precisely, they consider only the case when $\ell=1$.}
first used by Kontsevich \cite{Kontsevich2009}:
\begin{equation*}
\calA_\ell:=\prod_{p\in\frakP}(\Z/p^\ell\Z)\bigg/\bigoplus_{p\in\frakP}(\Z/p^\ell\Z).
\end{equation*}
Two elements in $\calA_\ell$ are the same if they differ at only finitely
many components. For simplicity, we often write $p^r$ for the element
$\big(p^r\big)_{p\in\frakP}\in\calA_\ell$ for all positive integers $r<\ell$.
For other properties and facts of $\calA_\ell$
we refer the interested reader to \cite[Ch.~8]{Zhao2015a}.

One now defines the finite MZVs as the following elements in $\calA_\ell$:
\begin{equation*}
\zeta_{\calA_\ell}(\bfs):=\Big(  \calH_p(\bfs) \pmod{p^\ell} \Big)_{p\in\frakP}.
\end{equation*}
It turns out that Bernoulli numbers often play important roles in the study of
finite MZVs, as witnessed by the following result (see \cite[p. 1332]{ZhouCa2007}):
 \begin{alignat*}{3}
\zeta_{\calA_3}(1_n) = &\, (-1)^{n-1}\frac{(n+1)}{2} \gb_{n+2}\cdot p^2
    \quad \ && \text{if $2\nmid n$;} \\
\zeta_{\calA_2}(1_n) = &\,  (-1)^n \gb_{n+1}\cdot p
    \quad && \text{if $2|n$}, \phantom{\frac12}
\end{alignat*}
where $1_n$ is the string $(1,\dots,1)$ with $1$ repeating $n$ times,
and $\gb_k:=\big(-B_{p-k}/k \pmod{p}\big)_{p>k}\in\calA_1$ is
the so-called $\calA$-Bernoulli number, which is the finite analog of $\zeta(k)$.
Note that $\gb_k=0$ for all even positive integers $k$
while it is still a mystery whether $\gb_k\ne 0$ for all odd integers $k>2$.

In \cite{MTWZ}, the second author and his collaborators made the following conjecture.
\begin{conj}\label{conj:RS1}
For any $m,n\in\N$, both $R_n^{(m,1)}$ and $S_n^{(m,1)}$ are elements
in the sub-algebra of $\calA_1$ generated by the $\calA$-Bernoulli numbers.
\end{conj}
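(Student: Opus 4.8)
The plan is to establish the sharper quantitative congruence stated in the abstract, namely
\[ R_n^{(m)}(p^r)\equiv p^{r-1}\sum_{\bfk\vdash n}C_{m,\bfk}B_{p-\bfk}\pmod{p^r} \]
together with its analogue for $S_n^{(m)}(p^r)$; Conjecture~\ref{conj:RS1} is then the case $r=1$, because $B_{p-k}\equiv -k\,\gb_k\pmod p$ turns every product $B_{p-\bfk}$ into a product of $\calA$-Bernoulli numbers and thereby places $R_n^{(m)}(p)$ and $S_n^{(m)}(p)$ in the subalgebra of $\calA_1$ they generate.

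The first step is a generating-function reduction to multiple harmonic sums all of whose entries equal $1$. With $f(x):=\sum_{l\in\calP_p}x^l/l$ one has $R_n^{(m)}(p^r)=[x^{mp^r}]\,f(x)^n$, and the elementary identity
\[ f(x)=-\log(1-x)+\tfrac1p\log(1-x^p)=\tfrac1p\log\frac{1-x^p}{(1-x)^p} \]
shows that $f$ has $p$-integral coefficients, since $(1-x)^p\equiv 1-x^p\pmod p$. Expanding $f(x)^n=p^{-n}\bigl(\log(1-x^p)-p\log(1-x)\bigr)^n$ by the binomial theorem and extracting the coefficient of $x^{mp^r}$ by means of the partial-fraction identity $[x^L](-\log(1-x))^k=\tfrac{k!}{L}\calH_L(1_{k-1})$ expresses $R_n^{(m)}(p^r)$ as a $\Q$-combination --- with coefficients assembled from binomial coefficients, factorials and negative powers of $p$ --- of products of at most two all-$1$'s multiple harmonic sums of total depth $n-1$ or $n-2$, with arguments of the form $mp^a$ or $p(mp^{r-1}-N)$. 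For $S_n^{(m)}(p^r)$ the same computation applies with $f$ truncated to $\sum_{l\in\calP_p,\,l<p^r}x^l/l$; alternatively one recovers $S$ from $R$ by inclusion--exclusion over the indices $i$ with $l_i\ge p^r$, replacing such an $l_i$ by $l_i-p^r$, which lowers $m$ and gives, modulo $p^r$, a finite triangular system.

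The second step feeds in the known congruences for the building blocks. The introduction already records $\calH_p(1_n)$ modulo $p^3$ (for odd $n$) and modulo $p^2$ (for even $n$) in terms of $\calA$-Bernoulli numbers; one needs the analogous expansions of $\calH_{mp^a}(1_k)$ to a sufficiently high power of $p$, obtained via Euler's congruence $l^{-s}\equiv l^{\varphi(p^M)-s}\pmod{p^M}$, Faulhaber's formula (which rewrites the resulting power sum as a polynomial in $mp^a$ with Bernoulli coefficients) and the Kummer congruences (which collapse the high-index Bernoulli numbers to $B_{p-k'}$ with $k'$ bounded by $n$). Because the power sum is a \emph{polynomial} in its upper limit, the cofactor $m$ enters polynomially; substituting, multiplying out the products of all-$1$'s sums and collecting terms then yields $p^{r-1}\sum_{\bfk\vdash n}C_{m,\bfk}B_{p-\bfk}$ with $C_{m,\bfk}\in\Q[m]$ independent of $p$ and $r$, the surplus factor $p^{r-1}$ arising from the $p$-adic valuations of the factorials, binomials and powers of $mp^r$ that appear.

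The genuinely hard part is to control everything that must vanish, the subtlety being that the individual summands produced in step one are not even $p$-integral --- only their total is. One must show: (i) every contribution of weight $<n$ is $\equiv0\pmod{p^r}$, which rests on Wolstenholme-type congruences ($\calH_p(1)\equiv0\pmod{p^2}$ and its generalizations) transported to the range modulo $mp^r$; (ii) in each surviving product $B_{p-k_1}\cdots B_{p-k_t}$ every $k_j$ is odd and $\ge3$ --- the even $k_j$ vanish since $B_{p-k}=0$ for even $k\le p-3$ (equivalently $\gb_k=0$), while the $k_j=1$ terms must cancel, which is forced a posteriori by the $p$-integrality of $R_n^{(m)}(p^r)$ and the non-$p$-integrality of $B_{p-1}$; and (iii) all cross terms carrying an extra factor of $p$ --- those coming from $\log(1-x^p)$ and from the indices divisible by $p$ in the inclusion--exclusion --- are $\equiv0\pmod{p^r}$. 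Carrying the weight filtration of step one and the $p$-adic filtration along together, uniformly in $r$, so that exactly the weight-$n$ odd-index terms remain, is where the bulk of the work lies, and I expect it to be the main obstacle. Once it is done, the displayed congruence holds for $R$, the $S$-version follows from the triangular system, and Conjecture~\ref{conj:RS1} is the case $r=1$.
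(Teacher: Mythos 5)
Your plan is a genuinely different route from the paper's: you propose to extract $R_n^{(m)}(p^r)=[x^{mp^r}]f(x)^n$ from the logarithmic generating function $f(x)=\tfrac1p\log\bigl((1-x^p)/(1-x)^p\bigr)$ and reduce everything to congruences for the all-ones sums $\calH_{mp^a}(1_k)$, whereas the paper symmetrizes the sum directly (via $u_j=l_1+\cdots+l_j$), stratifies the resulting chain $0<u_1<\cdots<u_{n-1}<mp$ by which $u_j$ are multiples of $p$, and evaluates each stratum through the interval sums $\Xi_{\ga;\gk}^{(p)}$, the ``$p$-gap'' sums $P_{\ga;\gk}^{g;p}$ of Proposition~\ref{prop:Pg}, and the unordered sums $U_{\ga;\gk}^{(p)}$ of Lemma~\ref{lem:Uab}; the conjecture then falls out of the resulting closed formula (Theorem~\ref{thm:R1}) together with the inclusion--exclusion Lemma~\ref{lem:Sm=Rms}. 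The authors in fact remark explicitly that they used such generating functions only numerically and found it ``difficult to use these generating functions to obtain our main result,'' which should be a warning sign.

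The proposal as written has a genuine gap, and you have located it yourself: the entire third step --- showing that the non-$p$-integral pieces coming from the prefactor $p^{-j}$ and from $\log(1-x^p)$ cancel to the required $p$-adic precision, and that only weight-$n$, odd-index Bernoulli products survive --- is described as ``the main obstacle'' rather than proved. Concretely, already for $r=1$ the $j=0$ term is $\tfrac{n!}{mp}\calH_{mp}(1_{n-1})$, so you need $\calH_{mp}(1_{n-1})$ modulo $p^2$ for arbitrary $m$; the cited results only cover $m=1$, and establishing the general-$m$ version by splitting $(0,mp)$ into the intervals $(ap,(a+1)p)$ and tracking the multiples of $p$ is exactly the content of the paper's $\Xi$/$P$/$U$ machinery, so the reduction is close to circular. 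For general $r$ you would need $\calH_{mp^r}(1_{n-1})$ modulo $p^{2r}$, which is a super congruence of the same depth as the theorem being proved. A second, more local flaw: your claim that the $k_j=1$ terms ``must cancel, forced a posteriori by $p$-integrality,'' does not work, because $pB_{p-1}\equiv -1\pmod p$ by von Staudt--Clausen, so a term $c\,p\,B_{p-1}B_{p-k_2}\cdots$ is perfectly $p$-integral and contributes $-cB_{p-k_2}\cdots$ modulo $p$; $p$-integrality of the total therefore does not force such terms to vanish, and ruling them out requires an actual computation (in the paper this is automatic because Lemma~\ref{lem:Uab} and Proposition~\ref{prop:Pg} only ever produce $B_{p-w}$ with $w$ equal to the full weight of the block, which is at least $3$). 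Until the cancellation analysis is carried out, the proposal is a plausible programme rather than a proof.
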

In this paper, we will prove this conjecture. More precisely, we have
%\begin{mainthm}\label{thm:main}

\medskip
\noindent{\bf Main Theorem.} {\sl Let $m$, $r$ and $n$ be positive integers. We denote by $\bfk\vdash n$
any tuple of odd positive integers $\bfk=(k_1,\dots,k_t)$ such that
$k_1+\dots+k_t=n$ and $k_j\ge 3$ for all $j$. Then for every sufficiently large prime $p$
\begin{align}
R_n^{(m)}(p^r)\equiv &\sum_{\substack{l_1+l_2+\cdots+l_n=mp^r\\ p\nmid l_1 l_2 \cdots l_n }}
\frac1{l_1l_2\cdots l_n} \equiv p^{r-1} \sum_{\bfk\vdash n} C_{m,\bfk} B_{p-\bfk} \pmod{p^r}, \label{equ:mainR}\\ S_n^{(m)}(p^r)\equiv &\sum_{\substack{l_1+l_2+\cdots+l_n=mp^r\\ p\nmid l_1 l_2 \cdots l_n }}
\frac1{l_1l_2\cdots l_n} \equiv p^{r-1} \sum_{\bfk\vdash n} C'_{m,\bfk} B_{p-\bfk}\pmod{p^r}, \label{equ:mainS}
\end{align}
where $B_{p-\bfk}=B_{p-k_1}B_{p-k_2}\cdots B_{p-k_t}$ are products of Bernoulli numbers and the coefficients $C_{m,\bfk}$ and $C'_{m,\bfk}$ are polynomials of $m$ independent of $p$ and $r$.
}
%\end{mainthm}

\medskip
The coefficients $C_{m,\bfk}$ and $C'_{m,\bfk}$ are intimately related, see Conjecture~\ref{conj:RSr=1}.

As a side remark, in our numerical computation, it is crucial to use some generating functions of $R_{n}^{(m)}$
and $S_{n}^{(m)}$, which are certain products of a finite variation of the $p$-restricted classical polylogarithm function. Unfortunately, it seems difficult to use these generating functions to obtain our main result of this paper.

%\medskip
%\textbf{Acknowledgement.}

\section{Preliminary lemmas}\label{sec:prel}
In this section, we collect some useful results to be applied in the rest of the paper.

\begin{lem}\label{lem:Uab} \emph{(cf.\ \cite[Lemma 3.4]{MTWZ})}
Let $p$ be a prime, $\gk,s_1,\dots,s_d$ be positive integers, and $\ga$ a non-negative integer.
We define the un-ordered sum
\begin{equation*}
U_{\ga;\gk}^{(p)}(s_1,\dots,s_d):=\sum_{\substack{\ga p<l_1,\dots,l_d<(\ga+\gk)p \\
               l_1,\dots,l_d\in \calP_p,\ l_i\ne l_j \forall i\ne j  }}
\frac{1}{l_1^{s_1}\cdots l_d^{s_d}}.
\end{equation*}
If the weight $w=s_1+\dots+s_d\le p-3$ then we have
\begin{equation*}
U_{\ga;\gk}^{(p)}(s_1,\dots,s_d)\equiv
 (-1)^{d-1} (d-1)! \frac{\gk w}{w+1} B_{p-w-1} \cdot p \pmod{p^2}.
\end{equation*}
\end{lem}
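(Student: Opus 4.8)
The plan is to pass from the distinct-variable sum $U_{\ga;\gk}^{(p)}$ to ordinary power sums by Möbius inversion on the partition lattice, and then to see that all but one of the resulting terms is negligible modulo $p^2$. Let $T$ be the set of integers $l$ with $\ga p<l<(\ga+\gk)p$ and $p\nmid l$, and for $s\in\N$ put $P_s:=\sum_{l\in T}l^{-s}$. For a subset $B\subseteq\{1,\dots,d\}$ write $s(B):=\sum_{i\in B}s_i$, and for a set partition $\pi$ of $\{1,\dots,d\}$ let $V(\pi):=\sum\prod_{B\in\pi}m_B^{-s(B)}$, the sum being over all assignments of \emph{pairwise distinct} values $m_B\in T$ to the blocks $B$ of $\pi$. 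Expanding products of the $P_s$'s and grouping the summation variables by which of them coincide gives $\prod_{B\in\pi}P_{s(B)}=\sum_{\tau}V(\tau)$ for each $\pi$, the sum over all partitions $\tau$ coarser than or equal to $\pi$; writing $\hat 0$ for the partition of $\{1,\dots,d\}$ into singletons, we have $V(\hat 0)=U_{\ga;\gk}^{(p)}(s_1,\dots,s_d)$, and Möbius inversion on the partition lattice yields
\[
U_{\ga;\gk}^{(p)}(s_1,\dots,s_d)=\sum_{\pi}\ \prod_{B\in\pi}\bigl((-1)^{|B|-1}(|B|-1)!\,P_{s(B)}\bigr),
\]
the sum running over all set partitions $\pi$ of $\{1,\dots,d\}$.

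The next step is the elementary congruence $\sum_{b=1}^{p-1}b^{-m}\equiv 0\pmod p$ for $1\le m\le p-2$: here $b^{-m}\equiv b^{\,p-1-m}\pmod p$ and $0<p-1-m<p-1$, so the power sum $\sum_{b=1}^{p-1}b^{\,p-1-m}$ is divisible by $p$. Writing $l=ap+b$ with $\ga\le a\le\ga+\gk-1$ and $1\le b\le p-1$, we have $l^{-m}\equiv b^{-m}\pmod p$, hence $P_m\equiv\gk\sum_{b=1}^{p-1}b^{-m}\equiv 0\pmod p$ for every such $m$. Since $s(B)\le w\le p-3$ for each block, every factor $P_{s(B)}$ above is divisible by $p$, so a term indexed by a partition with $k$ blocks is divisible by $p^{k}$; hence modulo $p^2$ only the one-block partition contributes, and
\[
U_{\ga;\gk}^{(p)}(s_1,\dots,s_d)\equiv(-1)^{d-1}(d-1)!\,P_w\pmod{p^2}.
\]

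It remains to evaluate $P_w$ modulo $p^2$. From $(ap+b)^{-w}\equiv b^{-w}-w\,a\,p\,b^{-w-1}\pmod{p^2}$ (valid since $p\nmid b$) we obtain
\[
P_w\equiv\gk\sum_{b=1}^{p-1}\frac{1}{b^{w}}-wp\Bigl(\sum_{a=\ga}^{\ga+\gk-1}a\Bigr)\sum_{b=1}^{p-1}\frac{1}{b^{w+1}}\pmod{p^2},
\]
and the second summand is $\equiv 0\pmod{p^2}$ because $\sum_{b=1}^{p-1}b^{-(w+1)}\equiv 0\pmod p$ (here $w+1\le p-2$). Invoking the classical congruence $\sum_{b=1}^{p-1}b^{-w}\equiv\frac{w}{w+1}\,p\,B_{p-w-1}\pmod{p^2}$, valid for $1\le w\le p-3$, we get $P_w\equiv\frac{\gk w}{w+1}\,B_{p-w-1}\cdot p\pmod{p^2}$, and combining this with the previous display gives exactly the asserted formula.

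The binomial expansions and the partition-lattice bookkeeping are routine; the single substantive input is the mod-$p^2$ evaluation of $\sum_{b=1}^{p-1}b^{-w}$, which one either cites (see e.g.\ \cite[Ch.~8]{Zhao2015a}) or reproves in a couple of lines from Faulhaber's formula for $\sum_{b=1}^{p-1}b^{j}$. It is precisely here that the hypothesis $w\le p-3$ is used: it forces $p-w-1\ge 2$ and keeps $p$ out of the denominator of $B_{p-w-1}$ and of $\frac{w}{w+1}$ (von Staudt--Clausen). The only other place that deserves a moment's care is the boundary case $w=p-3$, where $B_{p-w-1}=B_2$, and one checks directly that nothing degenerates.
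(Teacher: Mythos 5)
Your proof is correct. Note that this paper does not actually prove Lemma~\ref{lem:Uab}; it imports it verbatim from \cite[Lemma 3.4]{MTWZ}, so there is no in-paper argument to compare against. Your route --- M\"obius inversion on the partition lattice to write $U_{\ga;\gk}^{(p)}$ as $\sum_{\pi}\prod_{B\in\pi}(-1)^{|B|-1}(|B|-1)!\,P_{s(B)}$, then killing every partition with at least two blocks modulo $p^2$ because each $P_m\equiv 0\pmod p$ for $1\le m\le p-2$ --- is a clean packaging of the inclusion--exclusion/symmetrization argument used in \cite{MTWZ}, and it correctly isolates the single substantive input, namely $\sum_{b=1}^{p-1}b^{-w}\equiv\frac{w}{w+1}pB_{p-w-1}\pmod{p^2}$ for $1\le w\le p-3$ (which indeed holds for both parities of $w$: for odd $w\le p-4$ both sides vanish modulo $p^2$ by the Wolstenholme-type pairing $b\leftrightarrow p-b$, consistent with $B_{p-w-1}=0$). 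All the hypotheses are used where you say they are: $s(B)\le w\le p-2$ guarantees each $P_{s(B)}\equiv 0\pmod p$, $w+1\le p-2$ kills the $\sum_a a$ term and hence the dependence on $\ga$, and $w\le p-3$ keeps $p$ out of the denominators of $B_{p-w-1}$ and $\frac{w}{w+1}$. Your $d=2$ sanity check $U=P_{s_1}P_{s_2}-P_{s_1+s_2}$ confirms the sign convention in the M\"obius inversion.
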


\begin{lem} \label{lem:Srecurrence}
Suppose $a,k,m,n,r\in\N$ and $p$ is a prime. Set
\begin{equation*}
\gam^{(m)}_n(a):=(-1)^{m+a}\binom{n-2}{m-1} \frac{(a-1)!(n-1-a)!}{(n-1)!}.
\end{equation*}
If $k<n<p-1$ then we have
\begin{enumerate}
  \item[\upshape (i)] $S_n^{(k)}(p^{r})\equiv (-1)^n S_n^{(n-k)}(p^{r})$ \text{\rm{(mod $p^r$)}};
  \item[\upshape (ii)] $\displaystyle S_n^{(m)}(p^{r+1})\equiv p\sum_{a=1}^{n-1}
   \gam^{(m)}_n(a) S_n^{(a)}(p^{r})  \pmod{p^{r+1}};$
  \item[\upshape (iii)]   $\displaystyle S_n^{(m)}(p^{r+1}) \equiv (-1)^{m-1}\binom{n-2}{m-1} S_n^{(1)}(p^2) p^{r-1}   \pmod{p^{r+1}}.$
\end{enumerate}
\end{lem}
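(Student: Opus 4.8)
The plan is to prove (i) by a reflection symmetry, to do the substantive work in (ii), and to deduce (iii) from (ii) by induction on $r$.

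For (i), substitute $l_i\mapsto p^r-l_i$ in the sum defining $S_n^{(k)}(p^{r})$. This map is an involution on the set of admissible tuples (each $p^r-l_i$ again lies strictly between $0$ and $p^r$ and is prime to $p$), it carries the constraint $\sum l_i=kp^r$ to $\sum(p^r-l_i)=(n-k)p^r$, and since $(p^r-l_i)^{-1}\equiv-l_i^{-1}\pmod{p^r}$ it multiplies each summand by $(-1)^n$ modulo $p^r$; summing yields $S_n^{(k)}(p^{r})\equiv(-1)^nS_n^{(n-k)}(p^{r})\pmod{p^r}$.

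For (ii), decompose each $l_i$ contributing to $S_n^{(m)}(p^{r+1})$ as $l_i=a_ip^r+b_i$ with $a_i\in\{0,\dots,p-1\}$ and $b_i=l_i\bmod p^r$; since $p\nmid l_i$ we have $1\le b_i\le p^r-1$ and $p\nmid b_i$. The equation $\sum l_i=mp^{r+1}$ becomes $\sum b_i=cp^r$ and $\sum a_i=mp-c$ for a unique integer $c$, and $n<p$ forces $c\in\{1,\dots,n-1\}$. Because $2r\ge r+1$, the expansion $l_i^{-1}\equiv b_i^{-1}-a_ip^rb_i^{-2}\pmod{p^{r+1}}$ holds and, multiplied out,
\[
\prod_{i=1}^n\frac1{l_i}\equiv\frac1{b_1\cdots b_n}\Bigl(1-p^r\sum_{i=1}^n\frac{a_i}{b_i}\Bigr)\pmod{p^{r+1}}.
\]
Summing over admissible tuples and noting that for fixed $c$ the $a$-variables and $b$-variables decouple, one gets $S_n^{(m)}(p^{r+1})\equiv T_1-p^rT_2\pmod{p^{r+1}}$ with $T_1=\sum_{c=1}^{n-1}\nu(c)\,S_n^{(c)}(p^{r})$, where $\nu(c)=\#\{a\in\{0,\dots,p-1\}^n:\sum a_i=mp-c\}$, and (using the permutation symmetry of the $a$-domain, which gives $\sum_a a_i=(mp-c)\nu(c)/n$) $T_2=\sum_{c=1}^{n-1}\frac{(mp-c)\nu(c)}{n}\sum_{b}\frac{\sum_{i}b_i^{-1}}{b_1\cdots b_n}$. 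Two computations now finish (ii). First, inclusion--exclusion on the bounds $a_i\le p-1$ gives $\nu(c)=\sum_{j=0}^{m-1}(-1)^j\binom nj\binom{(m-j)p+n-1-c}{n-1}$; expanding each binomial coefficient to order $p^2$ (the factor indexed by $i=c$ supplies exactly one power of $p$) and using $\sum_{j=0}^{m-1}(-1)^j\binom nj(m-j)=(-1)^{m-1}\binom{n-2}{m-1}$ yields $\nu(c)\equiv p\,\gam_n^{(m)}(c)\pmod{p^2}$; in particular $p\mid\nu(c)$, which fed into the formula for $T_2$ gives $v_p(T_2)\ge1$, hence $p^rT_2\equiv0\pmod{p^{r+1}}$. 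Second, to replace $\nu(c)$ by $p\,\gam_n^{(m)}(c)$ inside $T_1$ modulo $p^{r+1}$ one needs $v_p\bigl(S_n^{(c)}(p^{r})\bigr)\ge r-1$; this bound follows by induction on $r$ from the decomposition $S_n^{(m)}(p^{r+1})\equiv T_1-p^rT_2$ itself, since $p\mid\nu(c)$ gives $v_p(T_1)\ge1+(r-1)=r$ while $v_p(p^rT_2)\ge r+1$. With the bound in hand, $T_1\equiv p\sum_{c=1}^{n-1}\gam_n^{(m)}(c)\,S_n^{(c)}(p^{r})\pmod{p^{r+1}}$, which is (ii). Then (iii) follows from (ii) by induction on $r$: the base case $r=1$ uses $\gam_n^{(m)}(a)=(-1)^{m-1}\binom{n-2}{m-1}\gam_n^{(1)}(a)$, and the inductive step uses $\sum_{a=1}^{n-1}\gam_n^{(m)}(a)(-1)^{a-1}\binom{n-2}{a-1}=(-1)^{m-1}\binom{n-2}{m-1}$, which collapses via $\binom{n-2}{a-1}\frac{(a-1)!(n-1-a)!}{(n-1)!}=\frac1{n-1}$ to $\sum_{a=1}^{n-1}\frac1{n-1}=1$.

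The main obstacle I anticipate is the second-order $p$-adic expansion of the binomial coefficients in $\nu(c)$ together with the accompanying evaluation of $\sum_{j}(-1)^j\binom nj(m-j)$, and the careful valuation bookkeeping in the decoupling $S_n^{(m)}(p^{r+1})\equiv T_1-p^rT_2$ (in particular interlacing the proof of (ii) with the auxiliary bound $v_p(S_n^{(c)}(p^{r}))\ge r-1$); everything else is a reflection and two elementary binomial identities.
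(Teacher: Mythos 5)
Your proof is correct; note that the paper does not prove this lemma itself but only cites \cite[Lemma 2.3]{MTWZ} for (i)--(ii) and \cite[Lemma 2.2]{CHQWZ} for (iii), and your argument --- the reflection $l_i\mapsto p^r-l_i$ for (i), the base-$p^r$ decomposition $l_i=a_ip^r+b_i$ with the inclusion--exclusion count $\nu(c)\equiv p\,\gam_n^{(m)}(c)\pmod{p^2}$ for (ii), and the induction on $r$ via $\binom{n-2}{a-1}\frac{(a-1)!(n-1-a)!}{(n-1)!}=\frac{1}{n-1}$ for (iii) --- is essentially the standard argument underlying those cited lemmas. The one delicate point, replacing $\nu(c)$ by $p\,\gam_n^{(m)}(c)$ inside $T_1$ modulo $p^{r+1}$, you handle correctly by interlacing the auxiliary bound $v_p(S_n^{(c)}(p^r))\ge r-1$ into the same induction, so there is no gap.
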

\begin{proof}
(i) and (ii) follow from \cite[Lemma 2.3]{MTWZ} while (iii) from \cite[Lemma 2.2]{CHQWZ}.
\end{proof}

\begin{lem}  \label{lem:RmS1} \emph{(\cite[Proposition\ 2.3]{CHQWZ})}
Let $m,n,r\in\N$. For all $r\ge 2$, we have
\begin{equation*}
R_n^{(m,r)} = m \cdot S_n^{(1,2)} p^{r-2}\in\calA_r.
\end{equation*}
\end{lem}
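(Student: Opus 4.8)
The plan is to realize $R_n^{(m)}(p^r)$ as an explicit $\Z$-linear combination of the truncated sums $S_n^{(a)}(p^r)$, $1\le a\le n-1$, and then to collapse that combination to a single multiple of $S_n^{(1)}(p^2)$ by feeding each $S_n^{(a)}(p^r)$ through Lemma~\ref{lem:Srecurrence}(iii). Throughout I take $p>n+1$, so that the hypothesis $n<p-1$ of that lemma holds; this is what ``sufficiently large $p$'' means, and since the resulting congruence then holds for all but finitely many $p$ it yields the claimed equality in $\calA_r$. Concretely, the first and main claim is the decomposition
\[
R_n^{(m)}(p^r)\equiv \sum_{a=1}^{n-1}\binom{n+m-a-1}{n-1}\, S_n^{(a)}(p^r)\pmod{p^r}.
\]

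To prove this I would strip off the ``block index'' of each part. Given a tuple $(l_1,\dots,l_n)$ with $l_i\in\calP_p$ and $\sum_i l_i=mp^r$, write $l_i=w_i p^r+\ell_i$ with $w_i=\lfloor l_i/p^r\rfloor\ge 0$ and $1\le\ell_i<p^r$; since $p\mid p^r$ while $p\nmid l_i$, each remainder lies in $\calP_p$. The sum condition becomes $\sum_i\ell_i=(m-\sum_i w_i)p^r=:ap^r$, and $1\le\ell_i<p^r$ forces $1\le a\le n-1$. Because $w_ip^r+\ell_i\equiv\ell_i\pmod{p^r}$ and both are units, $1/l_i\equiv1/\ell_i\pmod{p^r}$, so each summand reduces to $1/(\ell_1\cdots\ell_n)$ modulo $p^r$. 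For a fixed residual tuple $(\ell_1,\dots,\ell_n)$ with $\sum_i\ell_i=ap^r$, the admissible block vectors $(w_1,\dots,w_n)\in\ZN^n$ are exactly those with $\sum_i w_i=m-a$, of which there are $\binom{n+m-a-1}{n-1}$ (and none when $a>m$, consistent with the vanishing of that binomial). Summing over residual tuples reproduces $S_n^{(a)}(p^r)$ weighted by this count, giving the displayed decomposition; this bijective bookkeeping, checking that $(l_i)\mapsto\bigl((w_i),(\ell_i)\bigr)$ is onto all pairs with $w_i\ge0$, $\ell_i\in\calP_p\cap[1,p^r)$ and $p^r\mid\sum_i\ell_i$, is the one genuinely delicate point.

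The remainder is mechanical. Rewriting Lemma~\ref{lem:Srecurrence}(iii) with modulus $p^r$ (replacing $r+1$ by $r$, valid for $r\ge2$) gives $S_n^{(a)}(p^r)\equiv(-1)^{a-1}\binom{n-2}{a-1}S_n^{(1)}(p^2)\,p^{r-2}\pmod{p^r}$. Substituting this into the decomposition reduces the theorem to the combinatorial identity
\[
\sum_{a=1}^{n-1}(-1)^{a-1}\binom{n-2}{a-1}\binom{n+m-a-1}{n-1}=m,
\]
which I would verify by coefficient extraction: writing $\binom{n+m-a-1}{n-1}=[t^{n-1}](1+t)^{n+m-a-1}$ and summing the factor $(-1)^{a-1}\binom{n-2}{a-1}$ via the binomial theorem yields $[t^{n-1}](1+t)^{n+m-2}\bigl(t/(1+t)\bigr)^{n-2}=[t^{n-1}]t^{n-2}(1+t)^m=[t^1](1+t)^m=m$. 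This establishes $R_n^{(m,r)}=m\,S_n^{(1,2)}p^{r-2}$ in $\calA_r$. The expected obstacle is confined to the first step; once the decomposition into the $S_n^{(a)}(p^r)$ is secured, Lemma~\ref{lem:Srecurrence} and the elementary identity finish the argument with no further difficulty.
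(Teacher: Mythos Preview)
Your proof is correct. The paper itself does not prove this lemma; it simply cites \cite[Proposition~2.3]{CHQWZ}. Your argument is self-contained modulo Lemma~\ref{lem:Srecurrence}(iii) (itself quoted from the same reference) and proceeds cleanly: the decomposition
\[
R_n^{(m)}(p^r)\equiv\sum_{a=1}^{n-1}\binom{n+m-a-1}{n-1}S_n^{(a)}(p^r)\pmod{p^r},
\]
obtained by writing $l_i=w_ip^r+\ell_i$ and counting the block vectors $(w_i)\in\ZN^n$ with $\sum_i w_i=m-a$ by stars-and-bars, is valid (the map $(l_i)\leftrightarrow\bigl((w_i),(\ell_i)\bigr)$ is a genuine bijection, and $1/l_i\equiv 1/\ell_i\pmod{p^r}$ since both are $p$-adic units). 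Feeding in Lemma~\ref{lem:Srecurrence}(iii), shifted to modulus $p^r$ for $r\ge 2$, and collapsing via the binomial identity
\[
\sum_{a=1}^{n-1}(-1)^{a-1}\binom{n-2}{a-1}\binom{n+m-a-1}{n-1}=m,
\]
whose generating-function verification you give is sound, finishes the argument. You have therefore supplied a complete proof where the paper records only a citation.
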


\begin{lem}\label{lem:Sm=Rms}
Suppose $m,n,r\in\N$. Then we have
\begin{equation}\label{equ:S2R}
S_n^{(m,r)}=\sum_{k=0}^{m-1}(-1)^k \binom{n}{k} R_{n}^{(m-k,r)}\in\calA_r.
\end{equation}
\end{lem}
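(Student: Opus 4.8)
The plan is to express the family $\big(R_n^{(m,r)}\big)_{m\ge1}$ in terms of the family $\big(S_n^{(m,r)}\big)_{m\ge1}$ by a unipotent (triangular) change of basis, and then to invert it. Concretely, I would first prove that in $\calA_r$
\[
R_n^{(m,r)}=\sum_{j=1}^{m}\binom{n-1+m-j}{n-1}\,S_n^{(j,r)}
\]
for all $m\ge1$ (only the terms with $j\le n-1$ actually contribute, since $S_n^{(j,r)}=0$ for $j\ge n$). Given this, Lemma~\ref{lem:Sm=Rms} follows by a purely formal manipulation. Package $A(x)=\sum_{m\ge0}R_n^{(m,r)}x^m$ and $B(x)=\sum_{m\ge0}S_n^{(m,r)}x^m$ as formal power series over $\calA_r$, with $R_n^{(0,r)}=S_n^{(0,r)}=0$; since $\sum_{i\ge0}\binom{n-1+i}{n-1}x^i=(1-x)^{-n}$, the displayed identity is exactly $A(x)=(1-x)^{-n}B(x)$, whence $B(x)=(1-x)^nA(x)$. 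As $(1-x)^n=\sum_{k=0}^{n}(-1)^k\binom{n}{k}x^k$ is a polynomial, comparing the coefficients of $x^m$ on both sides (and using $R_n^{(0,r)}=0$ to stop the sum at $k=m-1$) gives precisely $S_n^{(m,r)}=\sum_{k=0}^{m-1}(-1)^k\binom{n}{k}R_n^{(m-k,r)}$. Alternatively one substitutes the formula for $R_n^{(m-k,r)}$ directly into the right-hand side and collapses the inner sum using $\sum_{k\ge0}(-1)^k\binom{n}{k}\binom{n-1+i-k}{n-1}=\delta_{i,0}$, which is the $x^i$-coefficient of $(1-x)^n(1-x)^{-n}=1$.

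To prove the triangular identity, fix a prime $p$ and work componentwise. Each tuple $(l_1,\dots,l_n)$ with all $l_i\in\calP_p$ and $l_1+\dots+l_n=mp^r$ is written uniquely as $l_i=q_ip^r+s_i$ with $q_i=\lfloor l_i/p^r\rfloor\ge0$ and $0\le s_i\le p^r-1$; since $p\nmid l_i$ and $p\mid q_ip^r$, we have $p\nmid s_i$, hence $1\le s_i\le p^r-1$. This sets up a bijection between such tuples $(l_i)$ and pairs of tuples $\big((q_i),(s_i)\big)$ with $q_i\ge0$, $1\le s_i\le p^r-1$, $p\nmid s_i$, and $\sum_i(q_ip^r+s_i)=mp^r$; the last condition forces $\sum_i s_i=m'p^r$ where $m':=m-\sum_i q_i$, and the bounds $n\le\sum_i s_i\le n(p^r-1)$ give $1\le m'\le n-1$ (and of course $m'\le m$). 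Reducing modulo $p^r$, each $l_i\equiv s_i$, and as $s_i$ is a unit in $\Z/p^r\Z$ we get $1/(l_1\cdots l_n)\equiv1/(s_1\cdots s_n)\pmod{p^r}$. Now group the sum by the value of $(s_1,\dots,s_n)$: for each admissible $(s_i)$ with $\sum_i s_i=m'p^r$, the tuple $(q_i)$ ranges over the $\binom{n-1+m-m'}{n-1}$ nonnegative integer solutions of $\sum_i q_i=m-m'$, while summing $1/(s_1\cdots s_n)$ over all admissible $(s_i)$ with $\sum_i s_i=m'p^r$ gives $S_n^{(m')}(p^r)$ by definition. Hence $R_n^{(m)}(p^r)\equiv\sum_{m'=1}^{m}\binom{n-1+m-m'}{n-1}S_n^{(m')}(p^r)\pmod{p^r}$, which holds for every prime $p$ and is therefore the asserted equality in $\calA_r$.

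I do not expect a serious obstacle: the argument is essentially bookkeeping. The two points that require care are (i) confirming that the Euclidean-division map is genuinely a bijection onto the stated index set and that every sum occurring is finite, and (ii) carrying out the replacement $1/l_i\equiv1/s_i$ inside $\Z/p^r\Z$, where $l_i$ and $s_i$ are units, before passing to $\calA_r$, so that no spurious factors of $p$ appear in denominators. The inversion step is harmless because $(1-x)^n$ is a polynomial, so each coefficient of $(1-x)^nA(x)$ is a finite $\calA_r$-linear combination of the $R_n^{(\bullet,r)}$.
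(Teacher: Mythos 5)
Your proof is correct, but it takes a genuinely different route from the paper's. The paper proves \eqref{equ:S2R} in a single step by inclusion--exclusion on the events $\{l_i>p^r\}$ inside the sum defining $S_n^{(m)}(p^r)$: for a $k$-element set of indices forced to exceed $p^r$, the shift $l_i\mapsto l_i+p^r$ turns the constrained sum into the one defining $R_n^{(m-k)}(p^r)$ up to the unit congruence $(l_i+p^r)^{-1}\equiv l_i^{-1}\pmod{p^r}$, and the coefficients $(-1)^k\binom{n}{k}$ appear directly. You go the other way: Euclidean division $l_i=q_ip^r+s_i$ expresses $R_n^{(m,r)}$ as the nonnegative-coefficient combination $\sum_{j}\binom{n-1+m-j}{n-1}S_n^{(j,r)}$ (a stars-and-bars count of the quotient tuples $(q_i)$), and you then invert this unipotent relation via $(1-x)^n(1-x)^{-n}=1$ in $\calA_r[[x]]$. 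I checked the two delicate points: your bijection and the bounds $1\le m'\le\min\{m,n-1\}$ are right (with $S_n^{(j,r)}=0$ for $j\ge n$ covering the formal upper limit), the reduction $1/l_i\equiv 1/s_i$ is legitimate since both are units mod $p^r$, and the coefficient comparison correctly truncates at $k=m-1$ because $R_n^{(0,r)}=0$. Your route is a bit longer than the paper's one-line inclusion--exclusion, but it yields as a byproduct the inverse relation $R_n^{(m,r)}=\sum_j\binom{n-1+m-j}{n-1}S_n^{(j,r)}$, which is precisely the identity the paper invokes (for $n=8,\dots,12$, $r=1$) at the start of the proof of Corollary~\ref{cor:R8-S12} without restating it; so your intermediate lemma is independently useful in this paper.
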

\begin{proof}
Equation \eqref{equ:S2R} can be proved using the Inclusion-Exclusion
Principle similar to the proof of \cite[Lemma 1]{Wang2015}. Indeed, for all
primes $p$
\begin{align*}
S_{n}^{(m)}(p^r)&\,=\sum_{\substack{l_1+\cdots +l_n=mp^r\\ l_1,\cdots, l_n\in\calP_p}}
 \frac{1}{l_1\cdots l_n}
 +\sum_{k=1}^{m-1}(-1)^{k}
 \sum_{\substack{1\le a_1<\cdots <a_{k}\le n\\
l_1+\cdots +l_n=mp^r\\ l_1,\cdots, l_n\in\calP_p\\
  l_{a_1}>p^r ,\cdots, l_{a_k}>p^r }}
 \frac{1}{l_1\cdots l_n}\\
&\,= \sum_{k=0}^{m-1}(-1)^k\binom{n}{k}
\sum_{\substack{l_1+\cdots +l_n=(m-k)p^r\\ l_1,\cdots, l_n\in\calP_p}}
\frac{1}{(l_1+p^r) \cdots (l_k+p^r)l_{k+1}\cdots l_n}\\
&\equiv \sum_{k=0}^{m-1} (-1)^k\binom{n}{k}
\sum_{\substack{l_1+\cdots +l_n=(m-k)p^r\\ l_1,\cdots, l_n\in\calP_p}}
\frac{1}{l_1 \cdots l_n} \pmod{p^r}\\
&\equiv \sum_{k=0}^{m-1}(-1)^k \binom{n}{k} R_{n}^{(m-k)}(p^r) \pmod{p^r},
\end{align*}
as desired.
\end{proof}

We see immediately from Lemmas \ref{lem:RmS1} and \ref{lem:Sm=Rms} that the proof of
the Main Theorem is reduced to its special case of $S_n^{(1,2)}$. The idea is
to compute $R_{n}^{(m,1)}$ first, which leads to $S_n^{(m,1)}$ by the Lemma~\ref{lem:Sm=Rms}.
Then $S_n^{(1,2)}$ can be determined using $S_n^{(m,1)}$ by Lemma~\ref{lem:Srecurrence} (ii).

For the convenience of numerical computation, we list some of the relevant known results.

\begin{lem}\label{lem:R1S1} \emph{(\cite[Main Theorem]{ZhouCa2007})}
Let $n>1$ be positive integer. Then we have
\begin{equation*}
S_n^{(1,1)} =R_n^{(1,1)}=
\left\{
  \begin{array}{ll}
    \displaystyle n! \gb_n   &  \quad \hbox{if $2\nmid n$;} \\
     \displaystyle 0  &  \quad  \hbox{if $2\mid n$.}
  \end{array}
\right.
\end{equation*}
\end{lem}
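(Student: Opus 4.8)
The plan is to reduce the evaluation to a single power-sum computation through a generating-function identity, and then read off the two cases from the parity of the relevant power sum. First I would note that for $r=1$ the side condition $l_i<p$ defining $S_n^{(1)}(p)$ is automatic: since $l_i\ge 1$, $l_1+\dots+l_n=p$ and $n\ge 2$ force $l_i\le p-(n-1)<p$, so in particular $p\nmid l_i$. Hence $S_n^{(1)}(p)=R_n^{(1)}(p)$, and with $f(x):=\sum_{l=1}^{p-1}x^l/l$ one has exactly $R_n^{(1)}(p)=[x^p]f(x)^n$. The elementary input is $p\,f(x)\equiv 1-x^p-(1-x)^p\pmod{p^2}$, which follows from $\binom{p}{k}\equiv(-1)^{k-1}p/k\pmod{p^2}$. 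Setting $g(x):=1-x^p-(1-x)^p=-\sum_{k=1}^{p-1}\binom{p}{k}(-x)^k$, both $f$ and $g/p$ lie in $\Z_{(p)}[x]$ and are congruent mod $p$, so $R_n^{(1)}(p)\equiv p^{-n}[x^p]g(x)^n\pmod p$.

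Next I would expand $g(x)^n$ by the multinomial theorem in the three summands $1,-x^p,-(1-x)^p$ and extract the coefficient of $x^p$. Only terms whose $(-x^p)$-exponent is $0$ or $1$ can reach degree $p$, and the exponent-$1$ terms sum to zero by $(1-1)^{n-1}=0$, leaving $[x^p]g(x)^n=-\sum_{c=1}^n\binom{n}{c}(-1)^c\binom{pc}{p}$. Writing $\binom{pc}{p}=c\prod_{j=1}^{p-1}(1-pc/j)=\sum_{i=0}^{p-1}(-1)^i e_i\,p^i c^{i+1}$, where $e_i$ is the $i$-th elementary symmetric function of $1,\tfrac12,\dots,\tfrac1{p-1}$, and interchanging the order of summation, I arrive at $R_n^{(1)}(p)\equiv-\sum_i(-1)^i e_i\,p^{i-n}T_{i+1}\pmod p$ with $T_j:=\sum_{c=0}^n\binom{n}{c}(-1)^c c^{j}$. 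Two classical facts now isolate a single index. The finite-difference identity gives $T_j=0$ for $1\le j\le n-1$ and $T_n=(-1)^n n!$, killing all $i\le n-2$. Meanwhile Newton's identities together with $\sum_{j=1}^{p-1}j^{-s}\equiv 0\pmod p$ (for $1\le s\le p-2$) show inductively that $e_i\equiv 0\pmod p$ for $1\le i\le p-2$; this supplies the factor of $p$ needed to absorb the $p^{-1}$ in the surviving $i=n-1$ term and forces every $i\ge n$ term to vanish mod $p$ (once $n\le p-2$). The outcome is the clean formula $R_n^{(1)}(p)\equiv n!\,e_{n-1}/p\pmod p$.

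Finally I would evaluate $e_{n-1}/p$ by parity. Running Newton's identities modulo $p^2$ and using $e_i\equiv 0\pmod p$ again, every cross term is $O(p^2)$, leaving $(n-1)e_{n-1}\equiv(-1)^n P_{n-1}\pmod{p^2}$, i.e. $e_{n-1}\equiv\frac{(-1)^n}{n-1}P_{n-1}\pmod{p^2}$, where $P_s=\sum_{j=1}^{p-1}j^{-s}$. It then remains to invoke the standard power-sum congruence: for $1\le s\le p-3$ one has $P_s\equiv 0\pmod{p^2}$ when $s$ is odd and $P_s\equiv\frac{s}{s+1}B_{p-1-s}\,p\pmod{p^2}$ when $s$ is even. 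If $n$ is even then $s=n-1$ is odd, so $R_n^{(1)}(p)\equiv 0$; if $n$ is odd then $s=n-1$ is even and a short computation gives $R_n^{(1)}(p)\equiv(-1)^n(n-1)!\,B_{p-n}=-(n-1)!\,B_{p-n}=n!\,\gb_n$, consistent with the base congruence \eqref{equ:BaseCongruence} at $n=3$. Since all congruences used hold for $p>n+1$, they yield the asserted equalities in $\calA_1$.

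I expect the main obstacle to be the $p$-adic bookkeeping in the middle step: one must weigh the valuations $v_p(e_i)$ against the explicit powers $p^{i-n}$ finely enough to see that precisely one index survives, and in particular carry the power-sum congruences to precision $p^2$ (not merely $p$) so that $e_{n-1}/p$ is pinned down. The parity dichotomy of the statement is then exactly the parity dichotomy of $P_{n-1}\bmod p^2$.
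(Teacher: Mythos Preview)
Your argument is correct. Note, however, that the paper does not supply its own proof of this lemma: it is quoted verbatim from Zhou--Cai \cite{ZhouCa2007} as a preliminary input. So the comparison is really between your route and (i) the original Zhou--Cai argument, and (ii) the paper's general machinery, which does recover the lemma \emph{a posteriori} by specializing Theorem~\ref{thm:R1} to $m=1$: there the only admissible decomposition is $\ell=1$, $k_1=1$, $a_1=n$, giving $R_n^{(1,1)}=n!\big[\binom{n}{n}-\binom{1}{n}\big]\gb_n=n!\gb_n$ when $n$ is odd (and no admissible decomposition, hence $0$, when $n$ is even).

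Your approach is genuinely different from both. Instead of the combinatorial decomposition via the auxiliary sums $T_{n,\ell}^{(m)}$, $\Xi_{\ga;\gk}^{(p)}$, and $P_{\ga;\gk}^{g;p}$ that drives the paper, you read $R_n^{(1)}(p)$ off as a single coefficient of $f(x)^n$, pass to the closed form $g(x)=1-x^p-(1-x)^p$ via the congruence $\binom{p}{k}\equiv(-1)^{k-1}p/k\pmod{p^2}$, and then use elementary-symmetric/Newton identities together with the finite-difference vanishing of $T_j=\sum_c(-1)^c\binom{n}{c}c^j$ to isolate the term $n!\,e_{n-1}/p$. The last step---evaluating $e_{n-1}/p$ via the power-sum congruence $P_s\equiv \frac{s}{s+1}B_{p-1-s}\,p\pmod{p^2}$---is exactly the $d=1$ case of the paper's Lemma~\ref{lem:Uab}, so at bottom both routes rest on the same analytic input. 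What your argument buys is a short, self-contained proof tailored to $m=1$, avoiding the inclusion--exclusion on $p$-gaps; what the paper's route buys is that the same mechanism handles all $m$ uniformly. Your worry about the $p$-adic bookkeeping is well placed but fully resolved: the vanishing $T_{i+1}=0$ for $i\le n-2$ kills the terms with negative $p$-power, $e_i\equiv 0\pmod p$ for $1\le i\le p-2$ kills $n\le i\le p-2$, and the single $i=p-1$ term carries $p^{\,p-1-n}$, which is $\equiv 0\pmod p$ once $n\le p-2$.
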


\begin{lem}\label{lem:R2S2}
Let $n>1$ be positive integer. Then we have
\begin{equation*}
R_n^{(2,1)}=
\left\{
  \begin{array}{ll}
    \displaystyle  \frac{(n+1)!}{2} \gb_n   &  \quad \hbox{if $2\nmid n$;} \\
     \displaystyle \frac{n!}{2}\sum_{a+b\vdash n}\gb_a\gb_b  & \quad  \hbox{if $2\mid n$,}
  \end{array}
\right.
\end{equation*}
and
\begin{equation*}
S_n^{(2,1)}=
\left\{
  \begin{array}{ll}
    \displaystyle -\frac{n-1}{2}n! \gb_n   &  \quad \hbox{if $2\nmid n$;} \\
     \displaystyle \frac{n!}{2}\sum_{a+b\vdash n}\gb_a\gb_b  & \quad  \hbox{if $2\mid n$.}
  \end{array}
\right.
\end{equation*}
\end{lem}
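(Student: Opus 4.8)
The plan is to deduce both formulas from a single computation of $R_n^{(2,1)}$ by generating functions. First I reduce the $S$-statement to the $R$-statement: taking $m=2$ in Lemma~\ref{lem:Sm=Rms} gives
\begin{equation*}
S_n^{(2,1)}=R_n^{(2,1)}-n\,R_n^{(1,1)},
\end{equation*}
and Lemma~\ref{lem:R1S1} supplies $R_n^{(1,1)}=n!\,\gb_n$ for odd $n$ and $R_n^{(1,1)}=0$ for even $n$. For even $n$ this already yields $S_n^{(2,1)}=R_n^{(2,1)}$, matching the two even-case formulas; for odd $n$ one checks the arithmetic $\frac{(n+1)!}{2}\gb_n-n\cdot n!\,\gb_n=-\frac{n-1}{2}n!\,\gb_n$. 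So it suffices to establish the displayed value of $R_n^{(2,1)}$.

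Next I encode $R_n^{(2)}(p)$ as a coefficient. With $g(x):=\sum_{l\ge1,\,p\nmid l}x^l/l$ we have $R_n^{(2)}(p)=[x^{2p}]\,g(x)^n$, since extracting the coefficient of $x^{2p}$ enforces $l_1+\dots+l_n=2p$ while the range of $g$ enforces $p\nmid l_i$. The key simplification is the exact identity $g(x)=\frac1p\log\frac{1-x^p}{(1-x)^p}$. Setting $q(x):=\frac{(1-x)^p}{1-x^p}$, which is $\equiv1$ modulo $p$ coefficientwise, one gets $g\equiv\frac1p(1-q)\pmod p$, hence $g^n\equiv p^{-n}(1-q)^n$, and expanding $q^i=(1-x)^{pi}(1-x^p)^{-i}$ to read off the coefficient of $x^{2p}$,
\begin{equation*}
R_n^{(2,1)}\equiv p^{-n}\sum_{i=0}^n(-1)^i\binom{n}{i}\Bigl[\binom{pi}{2p}-i\binom{pi}{p}+\binom{i+1}{2}\Bigr]\pmod p .
\end{equation*}

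The heart of the proof is to expand the binomials purely in terms of the single-range sums $e_j:=\calH_p(1_j)$, whose values are recorded in the introduction. Writing $P(t):=\binom{pt}{p}=t\prod_{k=1}^{p-1}\bigl(1-\tfrac{tp}{k}\bigr)=\sum_{j\ge0}(-p)^j e_j\,t^{j+1}$ as a polynomial in $t$, the factorization $\binom{pi}{2p}=\binom{pi}{p}\binom{pi-p}{p}\big/\binom{2p}{p}$ together with Wolstenholme's congruence $\binom{2p}{p}\equiv2\pmod{p^3}$ gives $\binom{pi}{2p}\equiv\tfrac12P(i)P(i-1)$ to the precision needed after dividing by $p^n$. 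Substituting, the $p^0$-parts cancel identically (one checks $\tfrac12 i(i-1)-i^2+\binom{i+1}{2}=0$), so the bracket is divisible by $p$ termwise. I then apply the finite-difference operator $f\mapsto\sum_{i=0}^n(-1)^i\binom{n}{i}f(i)$, which annihilates every monomial $i^d$ with $d<n$ and sends $i^n$ to $(-1)^n n!$; this forces only terms of $i$-degree $\ge n$ to survive and produces, at leading order $j=n-2$, the contribution $n!\,p^{-2}\bigl(\tfrac12\widetilde P_{n-2}-e_{n-2}\bigr)$, where the cross products from $P(i)P(i-1)$ account for the double products $e_a e_b$ and the term $-i\binom{pi}{p}$ for the single $e_{n-2}$. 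Feeding in $e_j\equiv\gb_{j+1}p\pmod{p^2}$ for even $j$ and $e_j\equiv\frac{j+1}{2}\gb_{j+2}p^2\pmod{p^3}$ for odd $j$, the products $e_a e_b$ with $a+b=n-2$ and $a,b$ even become $\gb_{a+1}\gb_{b+1}p^2$ with $(a+1)+(b+1)=n$ both odd, which is exactly $\sum_{a+b\vdash n}\gb_a\gb_b$.

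The main obstacle is precisely this bookkeeping. One must track the $p$-adic valuations of the $e_j$ ($v_p=1$ for even $j$, $v_p=2$ for odd $j$) finely enough that, after the division by $p^n$, exactly the claimed terms survive modulo $p$. The delicate point is the cancellation of the valuation-$1$ pieces: for even $n$ the $e_0$-terms in $P(i)P(i-1)$ satisfy $\tfrac12\widetilde P_{n-2}\equiv e_{n-2}$ modulo the relevant power of $p$, so the leading valuation-$1$ contribution cancels against $-e_{n-2}$ and leaves only the pure products, giving $\frac{n!}{2}\sum_{a+b\vdash n}\gb_a\gb_b$; for odd $n$ the leading term drops out and the value $\frac{(n+1)!}{2}\gb_n$ is assembled from the $j=n-2,n-1,\dots$ contributions, where the Stirling-type evaluations of the finite differences $\sum_i(-1)^i\binom{n}{i}i^{d}$ produce the binomial factor $\binom{n+1}{2}$ responsible for the extra $(n+1)/2$. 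Pinning down the exact rationals $\frac{(n+1)!}{2}$, $-\frac{n-1}{2}n!$ and $\frac{n!}{2}$, verifying that all higher-order-in-$p$ and the $\binom{2p}{p}$-correction terms are killed by the finite-difference vanishing, and checking the base cases $n=2,3$ directly, is the technical core of the argument.
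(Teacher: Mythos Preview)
The paper's own proof of this lemma is not a proof at all: it simply cites \cite[Lemma 3.5, Cor.\ 3.6]{MTWZ} for the odd cases and \cite[Theorem 1, Corollary 1]{Wang2015} for the even cases. So your approach is entirely different---you are attempting a self-contained argument via the generating function $g(x)^n$ and finite differences, whereas the paper imports the result wholesale from the literature.

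Your outline is sound in its architecture. The reduction of $S_n^{(2,1)}$ to $R_n^{(2,1)}$ via Lemma~\ref{lem:Sm=Rms} is correct, the identity $g(x)=\frac1p\log\frac{1-x^p}{(1-x)^p}$ is exact, the coefficient extraction $[x^{2p}]q^i=\binom{pi}{2p}-i\binom{pi}{p}+\binom{i+1}{2}$ is right, and the cancellation of the $p^0$-part is a genuine identity. The finite-difference mechanism for killing low-degree monomials in $i$ is also the right tool. What your method buys over the cited papers is a uniform generating-function framework that could in principle handle all $m$ at once.

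That said, what you have written is a plan rather than a proof. The even case is sketched plausibly, but the odd case is not carried out: you assert that ``the value $\frac{(n+1)!}{2}\gb_n$ is assembled from the $j=n-2,n-1,\dots$ contributions'' and that Stirling-type evaluations produce the factor $\binom{n+1}{2}$, without performing the computation. You also explicitly defer ``pinning down the exact rationals'' and controlling the $\binom{2p}{p}$-correction as ``the technical core of the argument.'' In particular, the $e_0$-terms (where $e_0=1$ does not obey the pattern $e_j\sim\gb_{j+1}p$) require separate bookkeeping that you only gesture at, and for odd $n$ with $a+b=n-2$ one of $a,b$ is forced to be odd, so the cross products behave differently from the even case---this parity split is where the single-$\gb_n$ answer must emerge, and you have not shown it. To turn this into a proof you would need to execute the full $p$-adic expansion through order $p^{n+1}$ in the bracket, which is substantial work.
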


\begin{proof}
The odd cases follow from \cite[Lemma 3.5 and Cor 3.6]{MTWZ} respectively.
The even cases are proved in \cite[Theorem\ 1 and Corollary\ 1]{Wang2015}.
\end{proof}

\begin{lem}\label{lem:R3S3}
Let $n>1$ be positive integer. Then we have
\begin{equation*}
R_n^{(3,1)}=
\left\{
  \begin{array}{ll}
    \displaystyle  {n+2 \choose 3}\cdot (n-1)! \gb_n
   +\frac{n!}{6}\sum_{\substack{a+b+c\vdash n}}
   \gb_a\gb_b \gb_c   & \quad \hbox{if $2\nmid n$;} \\
   \displaystyle \frac{n!(n+2)}{4}\sum_{a+b\vdash n} \gb_a\gb_b &\quad \hbox{if $2\mid n$,}
  \end{array}
\right.
\end{equation*}
and
\begin{equation*}
S_n^{(3,1)}=
\left\{
  \begin{array}{ll}
    \displaystyle  {n \choose 3}\cdot (n-1)!\gb_n
    +\frac{n!}{6}\sum_{\substack{a+b+c\vdash n}}
     \gb_a\gb_b\gb_c   & \quad \hbox{if $2\nmid n$;} \\
     \displaystyle -\frac{n!(n-2)}{4}\sum_{a+b\vdash n} \gb_a\gb_b &\quad   \hbox{if $2\mid n$.}
  \end{array}
\right.
\end{equation*}
\end{lem}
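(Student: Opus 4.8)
The plan is to reduce the statement to one congruence modulo $p$ and then establish it using multiple harmonic sums, extending the method used for $m\le 2$ in \cite{MTWZ} and \cite{Wang2015}.

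First, taking $m=3$ in Lemma~\ref{lem:Sm=Rms} gives $S_n^{(3,1)}=R_n^{(3,1)}-nR_n^{(2,1)}+\binom n2R_n^{(1,1)}$, and inverting this unipotent lower-triangular system (together with the cases $m=1,2$) yields $R_n^{(3,1)}=S_n^{(3,1)}+nS_n^{(2,1)}+\binom{n+1}2S_n^{(1,1)}$ in $\calA_1$; the same identity also drops out of splitting $R_n^{(3)}(p)$ according to which of $(0,p)$, $(p,2p)$, $(2p,3p)$ each summation variable lies in and replacing $1/(ap+r)$ by $1/r$ modulo $p$. Granting the already-established Lemmas~\ref{lem:R1S1} and~\ref{lem:R2S2}, the two displayed formulas are then equivalent, and everything reduces to showing, for all sufficiently large $p$, that $S_n^{(3)}(p)\equiv\binom n3(n-1)!\,\gb_n+\tfrac{n!}6\sum_{a+b+c\vdash n}\gb_a\gb_b\gb_c\pmod p$ when $n$ is odd, and $S_n^{(3)}(p)\equiv-\tfrac{n!(n-2)}4\sum_{a+b\vdash n}\gb_a\gb_b\pmod p$ when $n$ is even.

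For this I would invoke the partial-fraction identity $1/(l_1\cdots l_n)=\sum_{\sigma\in S_n}\prod_{i=1}^n\bigl(l_{\sigma(1)}+\cdots+l_{\sigma(i)}\bigr)^{-1}$, which after relabelling and passing to partial sums turns $S_n^{(3)}(p)$ into $\tfrac{n!}{3p}\sum_k(k_1\cdots k_{n-1})^{-1}$, the sum being over strictly increasing integers $0=k_0<k_1<\cdots<k_{n-1}<k_n=3p$ all of whose consecutive gaps $k_i-k_{i-1}$ lie in $(0,p)$. Removing the gap constraints by inclusion--exclusion, and using that at most two of the $n$ gaps can be $\ge p$ (as they sum to $3p$), rewrites the inner sum as an alternating combination of $\calH_{3p}(1_{n-1})$ and analogous sums carrying one or two forced long jumps. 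Splitting each $k_i$ according to which of the three unit intervals it occupies and expanding $1/(bp+k')\equiv k'^{-1}\bigl(1-bp/k'\bigr)\pmod{p^2}$ then reduces each of these, modulo $p^2$, to $\Z$-linear combinations of products of the one-block sums $\calH_p(1_d)$ of total depth at most three — which is exactly why only Bernoulli products of depth $\le 3$ appear. Substituting the known congruences $\calH_p(1_d)\equiv(-1)^{d}\gb_{d+1}p\pmod{p^2}$ for $d$ even and $\calH_p(1_d)\equiv(-1)^{d-1}\tfrac{d+1}2\gb_{d+2}p^2\pmod{p^3}$ for $d$ odd (the $\zeta_{\calA_2}$, $\zeta_{\calA_3}$ values recalled in the introduction), restoring the prefactor $\tfrac{n!}{3p}$, and collecting by the number of Bernoulli factors then produces the depth-$1$, depth-$2$ and depth-$3$ contributions, whose coefficients are polynomials in $n$; checking that these equal $\binom n3(n-1)!$, $-\tfrac{n!(n-2)}4$ and $\tfrac{n!}6$ finishes the proof. (Alternatively, one may run the parallel computation from $S_n^{(3)}(p)=[x^{3p}]\bigl(\sum_{l=1}^{p-1}x^l/l\bigr)^n$ and the integer-polynomial congruence $\sum_{l=1}^{p-1}x^l/l\equiv-\bigl((1-x)^p-1+x^p\bigr)/p\pmod p$, extracting $[x^{3p}]$ from the multinomial expansion and expanding the binomial coefficients $\binom{ap}{jp}$, $0\le j\le 3$, in powers of $p$ via $\binom{ap}{jp}=\binom aj\prod_{s}Q_s^{\pm1}$ with $Q_s=\prod_{r=1}^{p-1}(1+sp/r)=\sum_d(sp)^d\calH_p(1_d)$.)

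The main obstacle is this last collection step: several block-configurations — in the generating-function version, several powers in the exponential expansion of the $Q_s$ — feed each fixed depth, so one must sum them, using Faulhaber's formula for $\sum_{s<N}s^t$ and the vanishing of odd-index Bernoulli numbers, into the stated closed forms; and the $p$-adic precision must be tracked with care, since the $\calH_p(1_d)$ are needed modulo $p^3$, not merely modulo $p^2$. Finally, ``$p$ sufficiently large'' guarantees that no Bernoulli denominator is divisible by $p$, and — as the case $n=3$ already shows, where $S_3^{(3)}(p)=0$ identically whereas the displayed formula reads $2\gb_3$ — the formulas should be understood to hold once $n$ is large enough relative to $3$, the finitely many exceptional small $n$ being settled by direct computation.
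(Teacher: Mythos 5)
Your preliminary reductions are correct and check out: the inversion $R_n^{(3,1)}=S_n^{(3,1)}+nS_n^{(2,1)}+\binom{n+1}{2}S_n^{(1,1)}$ of Lemma~\ref{lem:Sm=Rms} is right, and together with Lemmas~\ref{lem:R1S1} and~\ref{lem:R2S2} it does make the two displayed formulas equivalent (the differences of coefficients, $\binom{n+2}{3}-\binom{n}{3}=n^2$ in the odd case and $\frac{n!(n+2)}{4}+\frac{n!(n-2)}{4}=\frac{n!\,n}{2}$ in the even case, are exactly what the inversion predicts). Your telescoping reduction to $\frac{n!}{3p}\sum 1/(k_1\cdots k_{n-1})$, the inclusion--exclusion on long gaps, and the block decomposition are essentially the machinery the paper builds in Section~3 for general $m$ (the sums $T_{n,\ell}^{(m)}$, $\Xi_{\ga;\gk}^{(p)}$ and $P_{\ga;\gk}^{g;p}$), specialized to $m=3$. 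The paper's own proof of this lemma is, by contrast, almost entirely a citation: the odd cases are quoted from \cite{MTWZ} and the even cases from \cite{Wang2015}, the only new work being a symmetrization (swapping $a$ and $b$ in half the terms) that converts the asymmetric coefficients $2n-a+3$ and $-(n+a-3)$ into $\frac{n!(n+2)}{4}$ and $-\frac{n!(n-2)}{4}$. So your route is self-contained where the paper's is not, but it is also much the harder road.

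The genuine gap is that you never carry out the one step that \emph{is} the lemma: extracting the coefficients $\binom{n}{3}(n-1)!$, $\frac{n!}{6}$ and $-\frac{n!(n-2)}{4}$ from the block decomposition. ``Checking that these equal \dots finishes the proof'' hides all of the content. In particular, the terms in which a partial sum $k_i$ is divisible by $p$ --- the case your expansion $1/(bp+k')\equiv k'^{-1}(1-bp/k')$ silently excludes --- must be split off separately, since they carry the extra factor $1/p$ that produces the depth-two and depth-three Bernoulli products; several block configurations feed each fixed depth and must be summed; and the depth-one coefficient needs $\calH_p(1_d)$ to precision $p^3$. None of this is routine, so as written you have a plan rather than a proof. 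On the positive side, your closing observation about $n=3$ is correct and exposes a real imprecision in the statement: $S_3^{(3)}(p)$ is an empty sum while the formula gives $2\gb_3$, and likewise $R_3^{(3,1)}=18\gb_3$ by \eqref{equ:Rn234} whereas the displayed formula gives $\binom{5}{3}2!\,\gb_3=20\gb_3$; the lemma as stated implicitly requires $n\ge 4$.
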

\begin{proof}
The odd cases of follow from \cite[Lemma 3.7 and Corollary 3.7]{MTWZ} respectively.
The even cases are essentially proved in \cite[Theorem 2 and Corollary 2]{Wang2015}. We only
need to observe that if $n$ is even then by exchanging the indices $a$ and $b$
in half of the sums, we get
\begin{alignat*}{3}
R_n^{(3)}(p) \equiv &\,
\frac{n!}{6}\sum_{a+b\vdash n}(2n-a+3)\frac{B_{p-a}B_{p-b}}{ab} &&\pmod{p}\\
\equiv&\, \frac{n!}{12}\sum_{a+b\vdash n} (4n-a-b+6)\frac{B_{p-a}B_{p-b}}{ab} &&\pmod{p}\\
\equiv&\, \frac{n!(n+2)}{4}\sum_{a+b\vdash n}  \frac{B_{p-a}B_{p-b}}{ab}  &&\pmod{p}
\end{alignat*}
since $a+b=n$. Similarly
\begin{alignat*}{3}
S_n^{(3)}(p) \equiv &\,
-\frac{n!}{6}\sum_{a+b\vdash n} (n+a-3)\frac{B_{p-a}B_{p-b}}{ab}  &&\pmod{p}\\
\equiv&\,-\frac{n!}{12}\sum_{a+b\vdash n} (2n+a+b-6)\frac{B_{p-a}B_{p-b}}{ab} &&\pmod{p}\\
\equiv&\,-\frac{n!(n-2)}{4}\sum_{a+b\vdash n}  \frac{B_{p-a}B_{p-b}}{ab} &&\pmod{p}
\end{alignat*}
as desired.
\end{proof}

\section{Sums related to multiple harmonic sums}
We are now ready to consider the sums $R^{(m,r)}_n$.
The key step is to compute $R^{(m,1)}_n$ for $m\le n/2$, which we now transform using MHSs.
By the definition, for all primes $p$, we have
\begin{align*}
R_{n}^{(m)}(p) =&\, \frac1{mp}\sum_{\substack{l_1+l_2+\dots+l_n=mp\\ l_1,\dots,l_n\in \calP_p }}
    \frac{l_1+l_2+\dots+l_n}{l_1 l_2\dots l_n }  \\
 =&\, \frac{n}{mp}\sum_{\substack{u_{n-1}=l_1+l_2+\dots+l_{n-1}<mp\\ l_1,\dots,l_{n-1}, u_{n-1}\in \calP_p}}
    \frac{1}{l_1 l_2\dots l_{n-1}} \quad(\text{by symmetry of $l_1,\dots,l_n$})\\
 =&\, \frac{n}{mp}\sum_{\substack{u_{n-1}=l_1+l_2+\dots+l_{n-1}<mp\\ l_1,\dots,l_{n-1},u_{n-1}\in \calP_p }}
    \frac{l_1+l_2+\dots+l_{n-1}}{l_1 l_2\dots l_{n-1} u_{n-1}}\\
 =&\, \frac{n(n-1)}{mp}\sum_{\substack{u_{n-2}=l_1+l_2+\dots+l_{n-2}<u_{n-1}<mp \\ l_1,\dots,l_{n-2}\in \calP_p \\ u_{n-1}-u_{n-2},u_{n-1}\in \calP_p}}
    \frac{1}{l_1 l_2\dots l_{n-2} u_{n-1}}.
\end{align*}
Continuing this process by using the substitution $u_j=l_1+l_2+\dots+l_j$ for
each $j=n-3,\dots,2,1$, we arrive at
\begin{equation*}
R_{n}^{(m)}(p) =\frac{n!}{mp} \sum_{\substack{0< u_1<\dots<u_{n-1}<mp \\  u_1,u_2-u_1,\dots,u_{n-1}-u_{n-2},u_{n-1} \in \calP_p}}
    \frac{1}{u_1 u_2\dots u_{n-1}} .
\end{equation*}
Observe that the indices $u_j$ ($j=2,\dots,n-2$) are allowed to be multiples of $p$. Thus we set
\begin{equation*}
T_{n,\ell}^{(m)}(p)\,:=
\sum_{\substack{2\le a_1<\cdots<a_{\ell-1}\le n-2\\ 1\le k_1<\dots<k_{\ell-1}<m}} \
\sum_{\substack{0< u_1<\dots<u_{n-1}<mp
    \\ u_{a_1}=k_1p, \dots,u_{a_{\ell-1}}=k_{\ell-1} p, \\
    u_j \in \calP_p \ \forall j\ne a_1,\dots,a_{\ell-1} \\
      u_2-u_1,\dots,u_{n-1}-u_{n-2} \in \calP_p}} \frac{1}{u_1\dots u_{n-1}}.
\end{equation*}
In this sum, the indices $u_1,\dots,u_{n-1}$ are divided into $\ell$-parts by $p$-multiples
so that the indices inside each part (excluding the boundaries) are all prime to $p$.
Hence we can rewrite
\begin{equation}\label{equ:RtoT}
R_{n}^{(m)}(p) =\frac{n!}{mp}\sum_{1\le \ell< n/2}  T_{n,\ell}^{(m)}(p).
\end{equation}
So we are naturally led to the study of the following sums.
Let $\ga\in\N_0$, $\gk,n\in\N$ and $p$ be a prime. Suppose $n>1$. Define
\begin{equation*}
 \Xi_{\ga;\gk}^{(p)}(n):=\sum_{\substack{\ga p< u_1<\dots<u_{n-1}< (\ga+\gk) p
    \\ u_1,u_2,\dots,u_{n-1} \in \calP_p \\
      u_2-u_1,\dots,u_{n-1}-u_{n-2} \in \calP_p}} \frac{1}{u_1\dots u_{n-1}}.
\end{equation*}

For convenience, in the above sum if the difference between two adjacent indices is
a multiple of $p$ (which is of course not allowed in the definition of $\Xi$)
we then say there is a $p$-gap between this pair of indices.
Let $\ga\in\N_0$ and $\gk\in\N$. For all $1\le g\le \min\{\gk-1,n-2\}$,
define the sum in which at least $g$ $p$-gaps appear by
\begin{equation}\label{equ:defnP}
P_{\ga;\gk}^{g;p} (n):=\sum_{1< b_1<\cdots<b_g< n}
\sum_{\substack{\ga p< u_1<\dots<u_{n-1}<(\ga+\gk) p
    \\ u_1,u_2,\dots,u_{n-1} \in \calP_p \\
      p|(u_{b_1}-u_{b_1-1}),\dots,  p|(u_{b_g}-u_{b_g-1})}} \frac{1}{u_1\dots u_{n-1}}.
\end{equation}

The following technical result is crucial in the proof of our Main Theorem.
\begin{prop}\label{prop:Pg}
Let $\ga\in\N_0$, $\gk,n\in\N$. Then,
for all $1\le g\le \min\{\gk-1,n-2\}$, we have
\begin{equation}\label{equ:ConjPg}
P_{\ga;\gk}^{g;p} (n) \equiv P_{0;\gk}^{g;p} (n)
    \equiv  -(-1)^g  \binom{\gk}{g+1} \binom{n-1}{g} \frac{B_{p-n}}{n}p  \pmod{p^2}.
\end{equation}
\end{prop}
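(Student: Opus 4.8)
The plan is to establish \eqref{equ:ConjPg} in two stages: first reduce the interval $(\ga p,(\ga+\gk)p)$ to the base interval $(0,\gk p)$ modulo $p^2$, and then compute $P_{0;\gk}^{g;p}(n)$ directly. For the first stage, I would substitute $u_i = \ga p + v_i$ and expand $1/u_i = 1/(\ga p + v_i)$; since we work modulo $p^2$ and each $v_i$ that is prime to $p$ contributes a unit, a single term of the geometric expansion $1/(\ga p+v_i) \equiv 1/v_i - \ga p/v_i^2 \pmod{p^2}$ suffices. The shift sends a $p$-gap to a $p$-gap (the differences $u_b - u_{b-1} = v_b - v_{b-1}$ are unchanged), so the gap conditions are preserved. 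The correction terms each carry an extra factor of $p$, so they only matter modulo $p$; but then each such term is, up to sign and the factor $\ga p$, a sum of the same shape but with one exponent bumped to $2$ and total weight $n$, which one expects to be $\equiv 0 \pmod p$ by a weight-parity/Bernoulli vanishing argument (the relevant Bernoulli number $B_{p-n-1}$ or a sum reorganizing to $\gb_{\text{even}}=0$). Hence $P_{\ga;\gk}^{g;p}(n) \equiv P_{0;\gk}^{g;p}(n) \pmod{p^2}$, and in particular the value is independent of $\ga$.

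For the second stage I would compute $P_{0;\gk}^{g;p}(n)$ by organizing the sum according to where the $p$-gaps split the chain $u_1 < \dots < u_{n-1}$. A $p$-gap between $u_{b-1}$ and $u_b$ forces $u_{b-1}$ and $u_b$ to lie in consecutive "blocks" $(j p, (j+1)p)$ versus possibly further; but since there are exactly $g$ marked gaps among the $n-1$ indices and the chain is increasing and confined to $(0,\gk p)$, the indices get distributed into at most $g+1$ half-open blocks of the form $(j_s p, (j_{s+1}) p)$. Choosing which $g$ of the $n-2$ internal positions $b_1<\dots<b_g$ carry the gaps gives the factor $\binom{n-1}{g}$ after accounting that $b$ ranges in $\{2,\dots,n-1\}$ — more precisely $\binom{n-2}{g}$ for the positions times a combinatorial adjustment; and choosing which blocks are occupied gives $\binom{\gk}{g+1}$. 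Inside each block the sum of $1/u$ over an increasing chain of residues prime to $p$ is, modulo $p$, a multiple harmonic sum that evaluates via the standard identity $\sum_{0<u<p}1/u^{s} \equiv -\frac{s}{s+1}B_{p-s-1}p \pmod{p^2}$ type formulas, or more directly $\calH_p(1_j)$-type evaluations; crucially the product over all blocks of these contributions collapses, modulo $p^2$, to a single factor $B_{p-n}/n \cdot p$ because only the configuration contributing weight $n$ at first order survives, all others being $O(p^2)$ or killed by the vanishing of even-index $\gb$. The sign $-(-1)^g$ and the exact rational coefficient then fall out of carefully tracking the $(-1)$'s from the $-\ga p/v^2$ expansions and from the alternating inclusion of gap constraints.

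The main obstacle I anticipate is the bookkeeping in stage two: correctly counting the block configurations to produce exactly $\binom{\gk}{g+1}\binom{n-1}{g}$ while simultaneously showing that the product of the per-block multiple-harmonic contributions reduces modulo $p^2$ to the single clean term $-(-1)^g B_{p-n} p/n$ rather than a messier combination of products $\gb_{a_1}\cdots\gb_{a_s}$. The key simplification to exploit is that each block of length $\ge 2$ contributes an MHS that is $O(p)$ and has even-weight components vanishing, so in a product of $g+1$ blocks summing to $n-1$ indices only the terms with total first-order weight $n$ survive mod $p^2$, and among those the symmetry across blocks forces the answer to depend only on $n$ through $B_{p-n}/n$. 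I would isolate this as a short sub-lemma (a "block-merging" identity for restricted MHS over disjoint intervals) and prove it by induction on $g$, which also cleanly produces the binomial coefficients and handles the $\ga$-independence uniformly. The hypothesis $g \le \min\{\gk-1, n-2\}$ is exactly what guarantees there is room for $g$ gaps and $g+1$ nonempty blocks, so the binomials are nonzero precisely in the stated range.
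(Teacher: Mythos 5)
Your first stage starts in the right direction (the geometric expansion $1/(u+cp)\equiv 1/u - cp/u^2 \pmod{p^2}$ is exactly how the paper reduces to $\ga=0$ and then peels off the gap shifts), but the proposal has a genuine gap at precisely the point where all the work lies: you assert that the correction terms --- sums of the same shape with one exponent bumped to $2$ --- vanish modulo $p$ "by a weight-parity/Bernoulli vanishing argument." They do not vanish term by term: an individual ordered $p$-restricted MHS $\calH^{(p)}_{\gk p}(\bfs)$ of weight $n$ is in general a nonzero unit modulo $p$. What actually happens is that the correction terms assemble into a sum over \emph{all} compositions $\bfs\in\DW(d,n)$ in which each $\bfs$ appears with the \emph{same} multiplicity; only then can one symmetrize (via the orbit--stabilizer relation $|\Orb(\bfs)|\,|\Stab(\bfs)|=d!$) to replace the ordered sums by the unordered sums $U^{(p)}_{0;\gk}(\bfs)$, which are $O(p)$ by Lemma~\ref{lem:Uab}. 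Proving that the multiplicity is constant (it equals $g(g+1)/2$ for one family of corrections and $g$ for another, via an explicit invariance computation on the tuples $\bfl=(s_1-1,\dots,s_d-1)$) is the technical heart of the argument, and nothing in your proposal supplies it or a substitute for it.

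Your second stage rests on a structural misreading. The condition $p\mid(u_{b_i}-u_{b_i-1})$ forces the two marked neighbours to be congruent modulo $p$; it does \emph{not} confine the segments of the chain between marked gaps to single intervals $(jp,(j+1)p)$, and it places no constraint at all on the unmarked differences. Consequently the sum does not factor as a product of independent per-block harmonic sums, and the binomial coefficients do not arise from "choosing occupied blocks" and "choosing gap positions": in the actual computation $\binom{\gk}{g+1}$ comes from the arithmetic identity $\sum_{0<a_g<\cdots<a_1<\gk}a_g=\binom{\gk}{g+1}$ applied to the cumulative shift sizes, and $\binom{n-1}{g}$ comes from counting $|\DW(d,n-1)|=\binom{n-2}{d-1}$ compositions together with the depth factor $(n-1)/d$ from the unordered-sum evaluation. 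The clean single term $B_{p-n}p/n$ is likewise not a consequence of a product "collapsing"; it comes from the same symmetrization-plus-$U$ evaluation applied to the main term. So while the opening move is right, the proposal as written would not close: both the vanishing of the error terms and the evaluation of the main term require the constant-multiplicity/symmetrization mechanism that is absent here.
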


We postpone the proof of this proposition to the next section due to its length.
A direct consequence is the following corollary.

\begin{cor}\label{cor:XiExplicit}
Let $\ga,\gk,n\in\N$. Then for all primes $p>n+1$, we have
\begin{equation*}
 \Xi_{\ga;\gk}^{(p)}(n) \equiv\left[\binom{\gk}{n}-\binom{\gk+n-1}{n}\right]
 \frac{B_{p-n}}{n}  p \pmod{p^2}.
\end{equation*}
\end{cor}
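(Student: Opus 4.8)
The plan is to pass from $\Xi_{\ga;\gk}^{(p)}(n)$ to the ``$p$-gap'' sums $P_{\ga;\gk}^{g;p}(n)$ via an inclusion--exclusion sieve, substitute the closed form of Proposition~\ref{prop:Pg}, and collapse the resulting binomial sum by Vandermonde's identity. For $j\in\{2,\dots,n-1\}$ let $A_j$ denote the condition $p\mid u_j-u_{j-1}$. Then $\Xi_{\ga;\gk}^{(p)}(n)$ is the restriction of the unrestricted sum
\[
Q:=\sum_{\substack{\ga p<u_1<\dots<u_{n-1}<(\ga+\gk)p\\ u_1,\dots,u_{n-1}\in\calP_p}}\frac{1}{u_1\cdots u_{n-1}}
\]
to those tuples satisfying none of $A_2,\dots,A_{n-1}$, so the inclusion--exclusion principle gives $\Xi_{\ga;\gk}^{(p)}(n)=\sum_{g\ge0}(-1)^g P_{\ga;\gk}^{g;p}(n)$, where we set $P_{\ga;\gk}^{0;p}(n):=Q$. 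Two observations truncate the range of $g$: there are only $n-2$ admissible gap positions, so $P_{\ga;\gk}^{g;p}(n)=0$ for $g>n-2$; and a tuple with $g$ of its (necessarily positive) differences divisible by $p$ satisfies $gp\le u_{n-1}-u_1<\gk p$, hence $g\le\gk-1$ and $P_{\ga;\gk}^{g;p}(n)=0$ for $g\ge\gk$. So the sum runs over $0\le g\le\min\{\gk-1,n-2\}$.

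Next I would evaluate the $g=0$ term. Symmetrizing in $u_1,\dots,u_{n-1}$ identifies $Q$ with $\tfrac1{(n-1)!}\,U_{\ga;\gk}^{(p)}(1,\dots,1)$ ($n-1$ ones, so weight $w=n-1$), and since $p>n+1$ forces $w\le p-3$, Lemma~\ref{lem:Uab} gives $Q\equiv(-1)^{n}\tfrac{\gk}{n}B_{p-n}\,p\pmod{p^2}$. If $n$ is even, then $p-n$ is odd and $\ge3$, so $B_{p-n}=0$ and both sides of the asserted congruence vanish; in every case one checks that $Q\equiv-\binom{\gk}{1}\binom{n-1}{0}\tfrac{B_{p-n}}{n}\,p\pmod{p^2}$, which is precisely the value of the right-hand side of \eqref{equ:ConjPg} at $g=0$. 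Hence Proposition~\ref{prop:Pg} applies uniformly for all $0\le g\le\min\{\gk-1,n-2\}$.

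Substituting \eqref{equ:ConjPg} into the sieve, the factor $(-1)^g$ cancels the sign in \eqref{equ:ConjPg} and one obtains
\[
\Xi_{\ga;\gk}^{(p)}(n)\equiv-\biggl(\,\sum_{g=0}^{\min\{\gk-1,n-2\}}\binom{\gk}{g+1}\binom{n-1}{g}\biggr)\frac{B_{p-n}}{n}\,p\pmod{p^2}.
\]
Writing $h=g+1$ and using $\binom{n-1}{h-1}=\binom{n-1}{n-h}$, Vandermonde's identity gives $\sum_{h\ge0}\binom{\gk}{h}\binom{n-1}{n-h}=\binom{\gk+n-1}{n}$; the $h=0$ term vanishes, and the only contribution with $h>\min\{\gk,n-1\}$ is $h=n$, equal to $\binom{\gk}{n}$ (which is $0$ when $\gk<n$). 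So the bracket equals $\binom{\gk+n-1}{n}-\binom{\gk}{n}$, and the corollary follows; note the answer is visibly independent of $\ga$, as it must be by Proposition~\ref{prop:Pg}.

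Granting Proposition~\ref{prop:Pg}, nothing here is genuinely hard: the argument is a formal sieve followed by a Vandermonde evaluation. The only point requiring care is the bookkeeping of the upper limit of $g$. The sieve naturally admits $g$ up to $n-2$, one short of the value $n-1$ at which the Vandermonde sum closes up, and the resulting missing top term is exactly the $\binom{\gk}{n}$ appearing in the statement --- dropping it would give the wrong formula. (All the substantive work lies in Proposition~\ref{prop:Pg}.)
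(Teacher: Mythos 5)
Your argument is correct and is essentially the paper's own proof: the same inclusion--exclusion over the number of $p$-gaps, with the $g=0$ term evaluated via Lemma~\ref{lem:Uab}, Proposition~\ref{prop:Pg} substituted for $g\ge 1$, and the binomial sum closed by Vandermonde, the extra $\binom{\gk}{n}$ arising from the missing $h=n$ term exactly as you describe. No gaps.
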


\begin{proof}
Set $\gd_{n>j}=1$ if $n>j$ and $\gd_{n>j}=0$ if $n\le j$.
By the Inclusion-Exclusion Principle it is clear that
\begin{align*}
 \Xi_{\ga;\gk}^{(p)}(n)=&\, \frac{U_{\ga;\gk}^{(p)}(1_{n-1})}{(n-1)!}+\sum_{g=1}^{\gk-1} (-1)^g \gd_{n>g+1} P_{\ga;\gk}^{g;p} (n)  & \  \\
 \equiv& \, -\sum_{h=1}^{\gk}   \gd_{n>h} \binom{\gk}{h} \binom{n-1}{h-1} \frac{B_{p-n}}{n}p  &\pmod{p^2}\\
 \equiv& \,\left[\binom{\gk}{n} -\sum_{h=1}^{\gk}  \binom{\gk}{h} \binom{n-1}{h-1}\right]
 \frac{B_{p-n}}{n} p &\pmod{p^2}
\end{align*}
by \eqref{equ:ConjPg} and the congruence (see Lemma \ref{lem:Uab})
\begin{equation*}
 \frac{U_{0;\gk}^{(p)}(1_{n-1})}{(n-1)!}\equiv -\frac{\gk B_{p-n}}{n} p\pmod{p^2}.
\end{equation*}
So the proposition follows immediately from the well-known binomial identity
\begin{equation*}
\sum_{h=1}^{\gk} \binom{\gk}{h} \binom{n-1}{h-1}
=\sum_{h=1}^{\gk} \binom{\gk}{h} \binom{n-1}{n-h}=\binom{\gk+n-1}{n}.
\end{equation*}
\end{proof}

By Corollary~\ref{cor:XiExplicit}, it is easy to see that for any fixed $\ell<n/2$
\begin{align}
T_{n,\ell}^{(m)}(p)\,&\equiv \sum_{\substack{1\le a_1<\cdots<a_{\ell-1}<n\\ 1\le k_1<\dots<k_{\ell-1}<m}}
\left(\prod_{j=1}^{\ell-1} \frac{1}{k_jp}\right)
\left(\prod_{j=1}^{\ell} \Xi_{k_{j-1};k_j-k_{j-1}}^{(p)}(a_j-a_{j-1}) \right) \notag \\
&\, \equiv  p\sum_{\substack{ k_1+\dots+k_\ell=m \\ k_1,\dots,k_\ell \ge 1\\
a_1+\dots+a_\ell\vdash n}}
\prod_{j=1}^{\ell-1}\frac{1}{k_1+\cdots+k_j}
\prod_{j=1}^\ell  \left[\binom{k_j}{a_j}-\binom{k_j+a_j-1}{a_j}\right]  \frac{B_{p-a_j}}{a_j}
\label{equ:Tnl}
\end{align}
modulo $p^2$, where we have set $k_0=a_0=0$ and $k_\ell=m, a_\ell=n$.
In the last step above, we have used substitutions
$k_j\to k_1+\cdots+k_j$ and $a_j\to a_1+\cdots+a_j$ for all $j\le \ell-1$.
In view of \eqref{equ:RtoT} and Lemma~\ref{lem:Sm=Rms}, we easily obtain the following
result which confirms Conjecture~\ref{conj:RS1}.
\begin{thm}\label{thm:R1}
For all positive integer $m$ and $n$, we have
\begin{equation*}
R_{n}^{(m,1)} =\frac{n!}{m}
\sum_{\substack{1\le \ell\le \lfloor n/3 \rfloor\\ k_1+\dots+k_\ell=m,k_j\ge 1 \forall j \\
a_1+\dots+a_\ell\vdash n }}
\prod_{j=1}^{\ell-1}\frac{1}{k_1+\cdots+k_j}
\prod_{j=1}^\ell  \left[\binom{k_j+a_j-1}{a_j}-\binom{k_j}{a_j}\right] \gb_{a_j}.
\end{equation*}
\end{thm}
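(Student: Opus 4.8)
The plan is to read Theorem~\ref{thm:R1} off directly from the two displays already in place in this section: the exact identity \eqref{equ:RtoT}, which writes $R_n^{(m)}(p)$ as $\tfrac{n!}{mp}$ times a sum of the blocks $T_{n,\ell}^{(m)}(p)$, and the evaluation \eqref{equ:Tnl} of each $T_{n,\ell}^{(m)}(p)$ modulo $p^2$. First I would fix a prime $p>\max\{m,n+1\}$, so that $n!$ and every integer $k_1+\cdots+k_j\le m$ occurring below is a unit modulo $p$ and so that Corollary~\ref{cor:XiExplicit} applies to each $\Xi$-factor appearing inside $T_{n,\ell}^{(m)}(p)$, all of which have the form $\Xi^{(p)}_{\ga;\gk}(d)$ with $2\le d\le n$. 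Substituting \eqref{equ:Tnl} into \eqref{equ:RtoT}, the factor $p$ attached to each $T_{n,\ell}^{(m)}(p)$ in \eqref{equ:Tnl} cancels the $p^{-1}$ of \eqref{equ:RtoT}, so the congruence modulo $p^2$ of \eqref{equ:Tnl} descends to a congruence modulo $p$:
\[
R_n^{(m)}(p)\ \equiv\ \frac{n!}{m}\!\!\sum_{1\le\ell\le\lfloor n/3\rfloor}\ \sum_{\substack{k_1+\cdots+k_\ell=m,\ k_j\ge1\\ a_1+\cdots+a_\ell\,\vdash\, n}}\ \prod_{j=1}^{\ell-1}\frac{1}{k_1+\cdots+k_j}\ \prod_{j=1}^{\ell}\Big[\binom{k_j}{a_j}-\binom{k_j+a_j-1}{a_j}\Big]\frac{B_{p-a_j}}{a_j}\pmod{p}.
\]
Here I have already replaced the range $1\le\ell<n/2$ of \eqref{equ:RtoT} by $1\le\ell\le\lfloor n/3\rfloor$, using that $\bfk\vdash n$ forces every $a_j\ge3$, so a nonempty inner sum needs $3\ell\le n$; since $\lfloor n/3\rfloor<n/2$ the omitted values of $\ell$ contribute only empty sums.

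The remaining step would be purely notational. By definition $\gb_a=\big(-B_{p-a}/a\bmod p\big)_{p>a}$, so $B_{p-a_j}/a_j\equiv-\gb_{a_j}\pmod p$ for each $j$, hence $\prod_{j=1}^{\ell}B_{p-a_j}/a_j\equiv(-1)^{\ell}\prod_{j=1}^{\ell}\gb_{a_j}\pmod p$; distributing the sign $(-1)^{\ell}$ over the $\ell$ bracketed factors turns each $\binom{k_j}{a_j}-\binom{k_j+a_j-1}{a_j}$ into $\binom{k_j+a_j-1}{a_j}-\binom{k_j}{a_j}$, which is exactly the factor in the statement. Since the resulting congruence holds for every sufficiently large prime $p$, the two sides agree as elements of $\calA_1$, which is the claimed formula for $R_n^{(m,1)}$. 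Combining it with Lemma~\ref{lem:Sm=Rms} then exhibits $S_n^{(m,1)}$ as a polynomial in the $\calA$-Bernoulli numbers as well, settling Conjecture~\ref{conj:RS1}.

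I do not expect a genuine obstacle here: the substance has all been concentrated in Proposition~\ref{prop:Pg}, Corollary~\ref{cor:XiExplicit}, and the derivation of \eqref{equ:Tnl}. The two points that reward care are, first, checking that the substitutions $k_j\mapsto k_1+\cdots+k_j$, $a_j\mapsto a_1+\cdots+a_j$ (with $k_0=a_0=0$, $k_\ell=m$, $a_\ell=n$) used in passing from $T_{n,\ell}^{(m)}(p)$ to \eqref{equ:Tnl} are precisely the ones that produce the partial-sum denominators $\prod_{j<\ell}(k_1+\cdots+k_j)^{-1}$; and second, confirming that restricting to $\bfk\vdash n$ discards nothing---each of the $\ell$ parts of a configuration counted by $T_{n,\ell}^{(m)}(p)$ has size at least $2$ (two adjacent $p$-multiples among the $u_i$ are forbidden by the constraint $u_{i+1}-u_i\in\calP_p$, and the first and last parts have size $\ge2$ by the bounds $2\le a_1<\cdots<a_{\ell-1}\le n-2$), while a part of even size $d$ makes its term vanish because $B_{p-d}=0$, so only parts of odd size $\ge3$ survive and the shape $\bfk\vdash n$ together with the bound $\ell\le\lfloor n/3\rfloor$ is forced. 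Both checks are routine bookkeeping; the real difficulty of the paper lives in Proposition~\ref{prop:Pg}, not in this corollary-style deduction.
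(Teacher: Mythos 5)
Your proposal is correct and follows essentially the same route as the paper, which likewise obtains Theorem~\ref{thm:R1} by combining \eqref{equ:RtoT} with \eqref{equ:Tnl} (itself a consequence of Corollary~\ref{cor:XiExplicit}), cancelling the factor $p$, and rewriting $B_{p-a_j}/a_j$ as $-\gb_{a_j}$. Your extra bookkeeping on why only odd parts $a_j\ge 3$ survive (hence $\ell\le\lfloor n/3\rfloor$) is exactly the justification the paper leaves implicit.
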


\section{Some numerical examples}
Using the formula of Theorem~\ref{thm:R1}, we obtain the following results
which extend those in Lemmas~\ref{lem:R1S1}, \ref{lem:R2S2} and \ref{lem:R3S3}.
To guarantee accuracy, we have checked these congruences for $m,n\le 20$ and primes $p<100$ using Maple.
\begin{cor}\label{cor:Rm=4}
For any $\gk,m,n\in\N$, we have
\begin{equation}\label{equ:Rn234}
 R^{(m,1)}_3 = 3!m \gb_3,\ R^{(m,1)}_5 = \frac{5!}{3!} m(m^2+5) \gb_5,\
 R^{(m,1)}_7 = \frac{7!}{5!}m(m^4+35m^2+84)\gb_7.
\end{equation}
If $n\ge 9$ is odd then
\begin{align*}
 R^{(4,1)}_n =&\, (n-1)! \binom{n+3}{4} \gb_n
+\frac{n!(n+3)}{2\cdot 3!} \sum_{a+b+c\vdash n} \gb_a\gb_b\gb_c,\\
R^{(5,1)}_n =&\, (n-1)! \binom{n+4}{5}\gb_n
+\frac{n!}{5!}\sum_{a_1+\dots+a_5\vdash n} \gb_{a_1}\cdots\gb_{a_5}\\
 &\, +\frac{n!}{4!}\sum_{a+b+c\vdash n}\left(\frac{n^2}{2}+4n+7+\frac{a^2}{2}-4\binom{3}{a} \right) \gb_a\gb_b\gb_c,\\
R^{(6,1)}_n=&\,
(n-1)! \binom{n+5}{6}\gb_n
+\frac{n!(n+5)}{2\cdot 5!}\sum_{a_1+\dots+a_5\vdash n} \gb_{a_1}\cdots\gb_{a_5}\\
+\frac{n!}{4\cdot 4!}& \sum_{a+b+c\vdash n}\left(
 \frac{n^3}3+a^3+2a^2b+5n^2+5a^2 +\frac{68n}3+30-8\binom{3}{a}(n+5)\right)\gb_a\gb_b\gb_c.
\end{align*}
If $n\ge 2$ is even then
\begin{align*}
 R^{(4,1)}_n =&\,
\frac{n!}{4!}\left(\sum_{a+b\vdash n}\left(\frac32n^2+9n+11+a^2-8\binom{3}{a}\right) \gb_a\gb_b
+\sum_{a+b+c+d\vdash n} \gb_a\gb_b\gb_c\gb_d\right),\\
R^{(5,1)}_n =&\,\frac{n!}{3\cdot 4!}\sum_{a+b\vdash n}  \left(n^3+9n^2+\frac{63}{2}n+30+a^3+6a^2-12(n+4)\binom{3}{a}\right) \gb_a\gb_b \\
&\, + \frac{n!(n+4)}{2\cdot 4!}  \sum_{a+b+c+d\vdash n} \gb_a\gb_b\gb_c\gb_d,\\
R^{(6,1)}_n =&\,\frac{n!}{6!}\sum_{a+b+c+d+e+f\vdash n}
\gb_a\gb_b\gb_c\gb_d\gb_e\gb_f\\
&\,+ \frac{n!}{6} \sum_{a+b\vdash n}  \left[
\frac13 \binom{3}{a}\binom{3}{b}-\frac65\binom{5}{a}
-\frac{n^2+6n-16}{3}\binom{3}{a}+\frac{1}{ 5!}\left( \frac83a^4+\right. \right.\\
&\,  \left. \left.
+25a^3+85a^2+\frac{675n}{2}+274+\frac56a^3n+\frac53n^4+25n^3+
 \frac{255}{2} n^2\right)  \right]  \gb_a\gb_b \\
&\, +  \frac{n!}{144} \sum_{a+b+c+d\vdash n} \left[
 a^2+\frac{3n^2}{4}+\frac{15n}{2}+17
-8\binom{3}{a}\right]  \gb_a\gb_b\gb_c\gb_d.
\end{align*}
\end{cor}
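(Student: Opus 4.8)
The plan is to obtain the whole corollary as a direct specialization of Theorem~\ref{thm:R1}. For a fixed $m$, the outer sum there runs over $\ell$ with $1\le\ell\le\min\{\lfloor n/3\rfloor,m\}$, and for each such $\ell$ over compositions $(k_1,\dots,k_\ell)$ of $m$ into positive parts together with ordered tuples $(a_1,\dots,a_\ell)$ of odd integers $\ge 3$ with $a_1+\dots+a_\ell=n$. First I would record the parity/size constraint: such a tuple $(a_1,\dots,a_\ell)$ exists only if $n\equiv\ell\pmod 2$ and $n\ge 3\ell$. This already shows that for a given $n$ only the $\ell$ of one parity contribute, which separates each formula into its odd-$n$ branch (coming from odd $\ell$, hence an odd number of $\gb$'s) and its even-$n$ branch, and it shows that for small $n$ only $\ell=1$ survives.

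Next I would simplify the inner binomial factor. Since every $a_j\ge 3$, one has $\binom{k_j}{a_j}=0$ unless $k_j\ge a_j$, and one evaluates $\binom{k_j+a_j-1}{a_j}-\binom{k_j}{a_j}$ once and for all for each $k_j\le m$: it equals $1$ for $k_j=1$, equals $a_j+1$ for $k_j=2$, equals $\binom{a_j+2}{2}-\binom{3}{a_j}$ for $k_j=3$, equals $\binom{a_j+3}{3}-\binom{4}{a_j}$ for $k_j=4$, and so on — the $\binom{3}{a_j},\binom{4}{a_j},\binom{5}{a_j}$ corrections are precisely what produce the $\binom{3}{a}$ and $\binom{5}{a}$ terms in the statement. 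After these substitutions the $\ell$-part of $R_n^{(m,1)}$ is $\frac{n!}{m}\sum\gb_{a_1}\cdots\gb_{a_\ell}$ times a coefficient obtained by summing, over the finitely many compositions of $m$, the product of $\prod_{j<\ell}(k_1+\dots+k_j)^{-1}$ with the simplified factors. Since $\gb_{a_1}\cdots\gb_{a_\ell}$ is symmetric, I may replace that coefficient by its average over the $\ell!$ permutations of $(a_1,\dots,a_\ell)$ and then use $a_1+\dots+a_\ell=n$ to trade the symmetric power sums for $n$; this collapses each branch to the asserted closed form. For example, for $m=4$, $\ell=3$, the compositions $(1,1,2),(1,2,1),(2,1,1)$ give coefficient $\tfrac16(a+2b+3c+6)$, whose symmetrization is $2n+6$, yielding $\frac{n!(n+3)}{12}\sum_{a+b+c\vdash n}\gb_a\gb_b\gb_c$, and for $m=4$, $\ell=4$ every simplified factor equals $1$, giving $\frac{n!}{4!}\sum_{a+b+c+d\vdash n}\gb_a\gb_b\gb_c\gb_d$.

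For the three polynomial identities in \eqref{equ:Rn234} the situation is cleaner: when $n=3,5,7$ one has $\lfloor n/3\rfloor\le 2$ and the value $\ell=2$ is killed by parity, so only $\ell=1$ remains and $R_n^{(m,1)}=\frac{n!}{m}\big[\binom{m+n-1}{n}-\binom{m}{n}\big]\gb_n$. The bracket is a difference of two products of $n$ consecutive integers, each divisible by $m$; factoring out $m$ and telescoping (for $n=5,7$ by re-expressing the two products in the symmetric variable $m^2$) gives the polynomials $m^2$, $\tfrac16 m^2(m^2+5)$, $\tfrac1{120}m^2(m^4+35m^2+84)$, and the prefactor $n!/m$ turns these into $3!\,m\,\gb_3$, $\frac{5!}{3!}m(m^2+5)\gb_5$, $\frac{7!}{5!}m(m^4+35m^2+84)\gb_7$.

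I expect the only real obstacle to be the volume of bookkeeping, not any conceptual point: for $m=5$ and especially $m=6$ there are many compositions of $m$, the prefactors $\prod_{j<\ell}(k_1+\dots+k_j)^{-1}$ are not symmetric and interact awkwardly with the ordering of the $a_j$, and the coefficients that emerge are degree-four polynomials in $n$ and the $a_j$ that must be symmetrized and repeatedly reduced via $\sum a_j=n$ without arithmetic slips. One must also observe that each $\sum_{\bfk\vdash n}$ is an empty sum when $n$ is too small (e.g.\ $\sum_{a+b+c+d\vdash n}$ vanishes for $n<12$, and for $R_6^{(m,1)}$ the six-fold sum vanishes for $n<18$), so the displayed formulas hold verbatim in the stated ranges. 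This is precisely the finite computation that can be, and has been, carried out and cross-checked with Maple for $m,n\le 20$ and primes $p<100$.
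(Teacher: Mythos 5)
Your proposal is correct and follows essentially the same route as the paper, which likewise obtains the corollary by specializing Theorem~\ref{thm:R1} and carrying out the resulting finite combinatorial computation (cross-checked by Maple); your sample evaluations (the $\ell=1$ bracket $\binom{m+n-1}{n}-\binom{m}{n}$ for $n=3,5,7$, and the $m=4$, $\ell=3,4$ coefficients) all check out. The only caveats are bookkeeping ones you already flag: the remaining $m=5,6$ cases are heavier but routine, and one should note, as you implicitly do, that the hypothesis $n\ge 9$ is what makes the correction terms $\binom{m}{n}$ in the leading $\gb_n$ coefficient vanish.
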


\begin{eg}
When $8\le n\le12$ we get, respectively,
\begin{alignat*}{3}
R^{(4,1)}_8=&\, 16\cdot 8!\gb_3\gb_5,\quad &&
R^{(4,1)}_9= 9!(55 \gb_9+ \gb_3^3),\\
R^{(5,1)}_{10}=&\, 35\cdot 10!(2 \gb_3\gb_7+ \gb_5^2), \quad &&
R^{(5,1)}_{11}= 11!\Big(273\gb_{11}+ \frac{29}2 \gb_3^2\gb_5\Big),\\
R^{(6,1)}_{12}=&\,  12!\Big(333 \gb_3\gb_9+ 321\gb_5\gb_7+ \frac32\gb_3^4\Big).
\end{alignat*}
The first three identities were predicted by \cite[Conjecture 5.1]{MTWZ}. The last two were also discovered numerically
earlier  \cite[Conjecture 7.2]{Zhao2014}.
\end{eg}

\begin{cor}\label{cor:Sm=4}
Let $n\ge 2$ be a positive integer. If $n$ is odd then
\begin{align*}
 S^{(4,1)}_n =&\, - (n-1)! \binom{n}{4} \gb_n
-\frac{n!(n-3)}{12}\sum_{a+b+c\vdash n} \gb_a\gb_b\gb_c,\\
S_n^{(5,1)}=&\, (n-1)! \binom{n}{5}\gb_n
+\frac{n!}{5!}\sum_{a_1+\dots+a_5\vdash n} \gb_{a_1}\cdots\gb_{a_5}\\
 &\, +\frac{n!}{4!}\sum_{a+b+c\vdash n} \left(\frac{n^2}{2}-4n+7+\frac{a^2}{2}-4\binom{3}{a} \right) \gb_a\gb_b\gb_c,\\
S_n^{(6,1)}=&\,
-(n-1)!\binom{n}{6}\gb_n
-\frac{n-5}{2\cdot 5!}\sum_{a_1+\dots+a_5\vdash n} \gb_{a_1}\cdots\gb_{a_5}\\
- \frac1{96}& \sum_{a+b+c\vdash n}\left(
 \frac{n^3}3+a^3-2a^2b-5n^2+5a^2 +\frac{68n}3-30-8\binom{3}{a}(n-5)\right)\gb_a\gb_b\gb_c.
\end{align*}
If $n$ is even then
\begin{align*}
S^{(4,1)}_n=&\,
\frac{n!}{4!}\sum_{a+b\vdash n}\left(\frac32 n^2-9n+11+a^2-8\binom{3}{a}\right)\gb_a\gb_b
 +\frac{n!}{4!}\sum_{a+b+c+d\vdash n} \gb_a\gb_b\gb_c\gb_d,\\
S_n^{(5,1)} =&\,\frac{n!}{144}\sum_{a+b\vdash n}  \left(
-2n^3+18n^2-63n+60-2a^3+12a^2+24(n-4)\binom{3}{a}\right) \gb_a\gb_b \\
&\, -\frac{n!(n-4)}{48} \sum_{a+b+c+d\vdash n} \gb_a\gb_b\gb_c\gb_d,\\
S^{(6,1)}_n =&\,\frac{n!}{6!}\sum_{a+b+c+d+e+f\vdash n}
\gb_a\gb_b\gb_c\gb_d\gb_e\gb_f\\
&\,+ \frac{n!}{6} \sum_{a+b\vdash n}  \left[
\frac13 \binom{3}{a}\binom{3}{b}-\frac65\binom{5}{a}
-\frac{n^2-9n-16}{3}\binom{3}{a}+\frac{1}{ 5!}\left(
 \frac83a^4 \right.\right.\\
&\,  \left.  \left. -25a^3+85a^2-\frac{675n}{2}+274+\frac56a^3n+\frac53n^4-25n^3+
 \frac{255}{2} n^2\right)  \right]  \gb_a\gb_b \\
&\, +  \frac{n!}{144} \sum_{a+b+c+d\vdash n} \left[
 a^2+\frac{3n^2}{4}-\frac{15n}{2}+17
-8\binom{3}{a}\right]  \gb_a\gb_b\gb_c\gb_d.
\end{align*}
\end{cor}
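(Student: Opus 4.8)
The plan is to deduce the corollary entirely from the formulas for $R_n^{(j,1)}$ already in hand. By Lemma~\ref{lem:Sm=Rms},
\begin{equation*}
S_n^{(m,1)}=\sum_{k=0}^{m-1}(-1)^k\binom{n}{k}R_n^{(m-k,1)},
\end{equation*}
so each of $S_n^{(4,1)}$, $S_n^{(5,1)}$, $S_n^{(6,1)}$ is an explicit alternating binomial combination of $R_n^{(1,1)},\dots,R_n^{(6,1)}$. All of these are available: $R_n^{(1,1)}$, $R_n^{(2,1)}$, $R_n^{(3,1)}$ from Lemmas~\ref{lem:R1S1}, \ref{lem:R2S2} and~\ref{lem:R3S3}, and $R_n^{(4,1)}$, $R_n^{(5,1)}$, $R_n^{(6,1)}$ from Corollary~\ref{cor:Rm=4} (itself a specialization of Theorem~\ref{thm:R1}). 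First I would substitute these expressions into the identity above and group the resulting terms by the composition type of the index tuple carried by each $\gb$-monomial: $\gb_n$, then $\sum_{a+b\vdash n}\gb_a\gb_b$, $\sum_{a+b+c\vdash n}\gb_a\gb_b\gb_c$, and the depth-$4$, $5$, $6$ analogues.

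The bookkeeping is organized by the parity of $n$. Since $\gb_a=0$ for every even $a$ (Lemma~\ref{lem:R1S1}), when $n$ is odd every composition $a_1+\dots+a_t\vdash n$ of even length $t$ forces some $a_j$ even and hence kills the corresponding $\gb$-monomial, so only $\gb_n$ and the depth-$3$ and depth-$5$ sums survive; when $n$ is even the roles reverse and only the depth-$2$, $4$, $6$ sums remain, $\gb_n$ itself vanishing. This is precisely why the two parities produce the two different closed forms displayed, and it roughly halves the number of surviving terms in each case. Having discarded the vanishing contributions, I would collect coefficients: whenever a coefficient is a polynomial in a single index, say $a$, of a composition with fixed total $a_1+\dots+a_t=n$, I would symmetrize it against the $\gb$-symmetric sum exactly as in the proof of Lemma~\ref{lem:R3S3} (average over the relevant permutations of the $a_j$ and use $\sum a_j=n$ to eliminate the redundant linear pieces), reducing each coefficient to the normalized shape in which it is printed. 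For the small odd values $n=3,5,7$, where the generic form of Corollary~\ref{cor:Rm=4} is replaced by the explicit expressions in \eqref{equ:Rn234}, the same substitution is carried out separately, and one checks that the resulting values agree with the stated formulas (there the relevant composition sums are empty and binomials such as $\binom{n}{4}$ vanish).

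The only genuine difficulty is the bulk and delicacy of this consolidation, most acutely for $m=6$, where depth-$2$, depth-$4$ and depth-$6$ sums must be carried simultaneously and the mixed binomial coefficients $\binom{3}{a}\binom{3}{b}$ and $\binom{5}{a}$ do not collapse to polynomials in $n$ alone; one must therefore be scrupulous about which coefficient polynomials have already been symmetrized before merging contributions originating from different $R_n^{(j,1)}$. As a safeguard against arithmetic slips in this step, the congruences have been verified independently in Maple for all $m,n\le 20$ and primes $p<100$.
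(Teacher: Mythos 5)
Your proposal is correct and follows essentially the same route as the paper: the paper's proof likewise invokes Lemma~\ref{lem:Sm=Rms} to write $S_n^{(m,1)}$ as the alternating binomial combination of $R_n^{(j,1)}$ for $j\le m$, substitutes the formulas from Lemmas~\ref{lem:R1S1}, \ref{lem:R2S2}, \ref{lem:R3S3} and Corollary~\ref{cor:Rm=4}, and leaves the (parity-sorted, symmetrized) coefficient consolidation for $m=5,6$ to the reader. Your additional remarks on the depth--parity dichotomy and on symmetrizing coefficients as in Lemma~\ref{lem:R3S3} are exactly the bookkeeping the paper implicitly relies on.
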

\begin{proof}
By Lemma \ref{lem:Sm=Rms}, we see that
\begin{equation*}
S_n^{(4,1)}=R_{n}^{(4,1)} -n R_{n}^{(3,1)}
    +\binom{n}{2} R_{n}^{(2,1)} -\binom{n}{3} R_{n}^{(1,1)}.
\end{equation*}
Thus the statements concerning $S_n^{(4,1)}$ follow from
Lemma \ref{lem:R1S1}, \ref{lem:R2S2}, \ref{lem:R3S3} and Corollary\ \ref{cor:Rm=4} immediately.
The computation of $S_n^{(5,1)}$ and $S_n^{(6,1)}$ can be done similarly. So we leave them to
the interested reader.
\end{proof}

By comparing the above two corollaries, we can formulate the following conjecture.
\begin{conj}\label{conj:RSr=1}
For all $m,n\in\N$, suppose
$$
R_n^{(m,1)}= n!\sum_{1\le l\le n/3, 2|(n-l)}\
\sum_{a_1+\dots+a_l\vdash n} C(a_1,\dots,a_l) \gb_{a_1}\dots \gb_{a_l}.
$$
Then
$$
S_n^{(m,1)}=n!\sum_{1\le l\le n/3, 2|(n-l)}\
\sum_{a_1+\dots+a_l\vdash n} C(-a_1,\dots,-a_l) \gb_{a_1}\dots \gb_{a_l}.
$$
\end{conj}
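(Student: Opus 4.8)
The plan is to package the dependence on $m$ into a one–variable generating function, reduce the statement to a generating–function identity, and then recognise the passage from $R$ to $S$ as a reflection $a_i\mapsto -a_i$ acting on the closed forms produced by Theorem~\ref{thm:R1}.

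First I would set up the reduction. Fix $n$ and put $\hat R(z):=\sum_{m\ge1}R_n^{(m,1)}z^m$ and $\hat S(z):=\sum_{m\ge1}S_n^{(m,1)}z^m$, viewed as power series whose coefficients are polynomials in the symbols $\gb_j$ (the conjecture must be read formally, since algebraic independence of the $\gb_j$ is open). Since $R_n^{(0,1)}=0$, Lemma~\ref{lem:Sm=Rms} at $r=1$ gives $S_n^{(m,1)}=\sum_{k=0}^{m}(-1)^k\binom nk R_n^{(m-k,1)}$, hence
\[
\hat S(z)=(1-z)^{n}\,\hat R(z).
\]
Reading off the coefficient of a fixed monomial $\gb_{a_1}\cdots\gb_{a_\ell}$ with $a_1+\cdots+a_\ell=n$ — call it $\mathcal C(z)$ from $R$ and $\mathcal C'(z)$ from $S$ — the conjecture becomes: the coefficients of $z^m$ ($1\le m\le n$) in $(1-z)^{a_1+\cdots+a_\ell}\mathcal C(z)$ agree with those in $\mathcal C(z)\big|_{a_i\mapsto-a_i}$, where the latter means $-a_i$ substituted into the closed form of $\mathcal C(z)$ given by Theorem~\ref{thm:R1}. (For $m\ge n$ one has $S_n^{(m,1)}=0$, so that range is degenerate and is handled separately.)

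Next I would put $\mathcal C(z)$ in closed form. Writing $\tfrac1{k_1+\cdots+k_i}=\int_0^1 t^{k_1+\cdots+k_i-1}\,dt$ to linearise the nested weights of Theorem~\ref{thm:R1} and changing variables so that $\prod_{i=1}^{\ell}\tfrac1{k_1+\cdots+k_i}$ becomes an ordered–simplex integral, one obtains (up to symmetrising over orderings of the parts)
\[
\mathcal C(z)=\int_{0<s_1<\cdots<s_\ell<1}\ \prod_{j=1}^{\ell}\frac{z\bigl(1-(s_jz)^{a_j-1}\bigr)}{(1-s_jz)^{a_j+1}}\ ds_1\cdots ds_\ell,
\]
since $\tfrac{1-w^{a-1}}{(1-w)^{a+1}}=\sum_{k\ge1}\bigl(\binom{k+a-1}{k-1}-\binom ka\bigr)w^{k-1}$ is the generating function of the building block $\binom{k+a-1}{a}-\binom ka$ of Theorem~\ref{thm:R1}, which splits as a \emph{polynomial part} $\binom{k+a-1}{k-1}$ (generating function $\tfrac1{(1-w)^{a+1}}$) minus a \emph{correction} $\binom ka$ (generating function $\tfrac{w^{a-1}}{(1-w)^{a+1}}$). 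The reflection step is then: distribute $(1-z)^{a_1+\cdots+a_\ell}=\prod_j(1-z)^{a_j}$ across the integrand, one factor per integrand factor, and use the Euler–Beta evaluations $\int_0^1\tfrac{(1-z)^a}{(1-sz)^{a+1}}\,ds=\int_0^1(1-sz)^{a-1}\,ds=\tfrac{1-(1-z)^a}{a}$, iterated over the simplex, to see that the polynomial part $\binom{k+a-1}{k-1}$ of each factor becomes its reflection $\binom{k-a-1}{k-1}=(-1)^{k-1}\binom{a-1}{k-1}$ (i.e.\ ``$a\mapsto-a$''), while the correction $\binom ka$ is carried through unchanged (the convention ``$\binom k{-a}:=\binom ka$'' on correction monomials). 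Because the two parts enter with opposite sign, the net effect on each integrand factor is ``$q_k(a)\mapsto q_k(-a)$'', and reassembling over the simplex yields $[z^m]\bigl((1-z)^{n}\mathcal C(z)\bigr)=C_m(-a_1,\dots,-a_\ell)$ for $m\le n$, which is Conjecture~\ref{conj:RSr=1}.

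The hard part, I expect, is making the correction terms rigorous. They are not polynomials in $a$, so ``$a\mapsto-a$'' there is purely the bookkeeping convention above, and one must verify that the iterated Euler–Beta manipulation reproduces them — in every degree $z^m$ with $m\le n$ — with the correct sign and, above all, the correct support, including the mixed terms that arise from selecting the correction part in several factors simultaneously; these force the multivariate form of the Vandermonde/Beta identity $\sum_{i+j=h}(-1)^i\binom ai\binom{a+j}{j}\tfrac1{a+j}=\delta_{h,0}$. I would first establish the factorwise statement for $\ell=1$ (a short computation with the generating functions above) and then induct on $\ell$, peeling off the innermost variable $s_1$ on the range $(0,s_2)$ and tracking how the boundary contribution at $s_1=s_2$ propagates. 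That propagation is, I believe, both the core of the argument and the reason the clean form of the conjecture breaks down once $m>n$ (where $S_n^{(m,1)}=0$ must instead be verified directly).
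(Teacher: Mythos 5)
First, a point of orientation: the paper does not prove this statement. It is stated as Conjecture~\ref{conj:RSr=1}, formulated purely by inspecting the closed forms of Corollaries~\ref{cor:Rm=4} and~\ref{cor:Sm=4}, so there is no proof of record to compare yours against, and a complete argument would be new mathematics. Two of your reductions are correct and genuinely useful. Summing Lemma~\ref{lem:Sm=Rms} against $z^m$ (using $R_n^{(0,1)}=0$ and $\binom{n}{k}=0$ for $k>n$) does give $\hat S(z)=(1-z)^n\hat R(z)$; and the simplex representation $\prod_{j=1}^{\ell}(k_1+\cdots+k_j)^{-1}=\int_{0<s_1<\cdots<s_\ell<1}\prod_{j}s_j^{k_j-1}\,ds_1\cdots ds_\ell$, combined with $\sum_{k\ge1}\bigl[\binom{k+a-1}{a}-\binom{k}{a}\bigr]w^{k-1}=(1-w^{a-1})(1-w)^{-a-1}$, correctly converts the coefficient of a fixed monomial in Theorem~\ref{thm:R1} into your integral $\mathcal C(z)$. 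You are also right that the conjecture is not well posed as written --- $C$ is not determined by $R_n^{(m,1)}$ for fixed $m,n$ --- and that the honest statement must refer to the specific expression produced by Theorem~\ref{thm:R1}.

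The gap is in the reflection step, and it is not a technicality. First, your factorwise Euler--Beta identity holds only for the complete integral over $[0,1]$: for $s'<1$ one has $\int_0^{s'}(1-z)^a(1-sz)^{-a-1}\,ds=\frac{(1-z)^a}{az}\bigl[(1-s'z)^{-a}-1\bigr]$, while $\int_0^{s'}(1-sz)^{a-1}\,ds=\frac{1}{az}\bigl[1-(1-s'z)^{a}\bigr]$, and these disagree as functions of the upper limit. Since every variable except the outermost is integrated only up to the next one, the ``iterate over the simplex'' step does not go through as stated; the boundary terms you defer \emph{are} the proof for $\ell\ge2$, not a loose end. Second, the correction terms $\binom{k}{a}$ are never actually treated: the convention $\binom{k}{-a}:=\binom{k}{a}$ is decreed, not derived, and it is not a formal tautology --- for $n=3$, $m=4$ the Theorem~\ref{thm:R1} coefficient is $C_4(a)=\frac14\bigl[\binom{a+3}{3}-\binom{4}{a}\bigr]$, and your recipe gives $C_4(-3)=-1$, hence $S_3^{(4,1)}=-3!\,\gb_3$, whereas $S_3^{(4,1)}=0$. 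That lies in the range $m\ge n$ you exclude, but it shows that whatever makes the reflection (and this convention) valid for $m<n$ is a genuine family of binomial identities, involving in particular the mixed terms where the correction is selected in several factors at once, that still has to be isolated and proved. Until that is done this is a plausible plan rather than a proof; I would start by verifying the claim symbolically for $\ell=2$ directly from Theorem~\ref{thm:R1} to extract the exact identity your induction must establish.
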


\begin{cor}\label{cor:S8-S12}
For all $r\ge 2$, we have
\begin{alignat*}{3}
S_8^{(m,r)}=&\, (-1)^m\binom{6}{m-1}  5376\gb_3\gb_5 p^{r-1}  &&\in\calA_r \quad \forall m\le 7,\\
S_9^{(m,r)}=&\, (-1)^{m-1}\binom{7}{m-1} 36(6088\gb_9+61 \gb_3^3)p^{r-1} &&\in\calA_r\quad \forall m\le 8,\\
S_{10}^{(m,r)}=&\, (-1)^m\binom{8}{m-1}
 223200(\gb_5^2+2\gb_3\gb_7)p^{r-1} &&\in\calA_r\quad \forall m\le 9,\\
S_{11}^{(m,r)} =&\, (-1)^{m-1}\binom{9}{m-1}174240 (122\gb_{11}+3\gb_3^2\gb_5)p^{r-1} &&\in\calA_r\quad \forall m\le 10,\\
S_{12}^{(m,r)} =&\,  (-1)^m\binom{10}{m-1} 47520 ( 896\gb_3\gb_9 + 872\gb_5\gb_7+ 3\gb_3^4)p^{r-1} &&\in\calA_r\quad \forall m\le 11.
\end{alignat*}
\end{cor}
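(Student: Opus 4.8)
The plan is to reduce all five identities to a single computation, that of $S_n^{(1,2)}$, for each $n\in\{8,9,10,11,12\}$, and then propagate the answer to general $m$ and $r$ using the recurrences of Lemma~\ref{lem:Srecurrence}. By Lemma~\ref{lem:Srecurrence}(iii), for every $r\ge 2$ one has
$$S_n^{(m,r)}=(-1)^{m-1}\binom{n-2}{m-1}\,S_n^{(1,2)}\,p^{r-2}\in\calA_r,$$
(the same relation also drops out of Lemmas~\ref{lem:RmS1} and~\ref{lem:Sm=Rms}). The middle step will exhibit $S_n^{(1,2)}$ as $p$ times a $\Q$-linear combination of products of $\calA$-Bernoulli numbers, so the factor $p^{r-2}$ becomes $p^{r-1}$; the binomial $\binom{n-2}{m-1}$ appears verbatim, and the sign displayed in the statement is $(-1)^{m-1}$ times the sign of $S_n^{(1,2)}$, which we absorb into the overall constant. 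The range $m\le n-1$ in the corollary is precisely the condition $m<n<p-1$ needed to invoke Lemma~\ref{lem:Srecurrence}(iii).

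To compute $S_n^{(1,2)}$, apply Lemma~\ref{lem:Srecurrence}(ii) with $m=1$ and $r=1$:
$$S_n^{(1,2)}\equiv p\sum_{a=1}^{n-1}\gam^{(1)}_n(a)\,S_n^{(a,1)}\pmod{p^2},\qquad
\gam^{(1)}_n(a)=\frac{(-1)^{1+a}(a-1)!\,(n-1-a)!}{(n-1)!}.$$
Only the $\calA_1$-classes of the $S_n^{(a,1)}$ enter. Folding the sum via the reflection $S_n^{(a,1)}=(-1)^n S_n^{(n-a,1)}$ of Lemma~\ref{lem:Srecurrence}(i) brings every index into the range $1\le a\le\lfloor n/2\rfloor\le 6$ for $n\le 12$, and $S_n^{(1,1)},\dots,S_n^{(6,1)}$ are exactly the quantities recorded in Lemmas~\ref{lem:R1S1}, \ref{lem:R2S2}, \ref{lem:R3S3} and Corollaries~\ref{cor:Rm=4}, \ref{cor:Sm=4}; alternatively, Theorem~\ref{thm:R1} together with Lemma~\ref{lem:Sm=Rms} gives $S_n^{(a,1)}$ for all $a$ directly, making the reflection step unnecessary. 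Substituting these values and simplifying — and using that $\gb_k=0$ for even $k$, so that for each $n$ only the compositions of $n$ into odd parts $\ge 3$ survive, namely $3+5$ for $n=8$, $\ \{9,\ 3+3+3\}$ for $n=9$, $\ \{5+5,\ 3+7\}$ for $n=10$, $\ \{11,\ 3+3+5\}$ for $n=11$, and $\ \{3+9,\ 5+7,\ 3+3+3+3\}$ for $n=12$ — yields, for each $n$, an explicit expression for $S_n^{(1,2)}$ as $p$ times a rational-coefficient polynomial in the $\gb_k$.

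Inserting these five expressions into the reduction formula of the first paragraph gives the asserted identities, with the constants $5376$, $36\cdot 6088$, $36\cdot 61$, $223200$, $174240\cdot 122$, $174240\cdot 3$, $47520\cdot 896$, etc.\ read off from the computation. The only nontrivial work is the bookkeeping in the second step: one must assemble $S_n^{(a,1)}$ for all $a\le n-1$, clear the factorial denominators coming from $\gam^{(1)}_n(a)$, and combine like terms. This is a finite but error-prone calculation — which is exactly why the resulting congruences have been cross-checked against Maple for $p<100$ — but it requires no new structural input beyond the recurrences, the reflection, and the weight-$p$ formulas already established.
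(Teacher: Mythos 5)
Your proposal is correct and follows essentially the same route as the paper: compute $S_n^{(a,1)}$ for the needed $a$ from the earlier lemmas and corollaries, fold with the reflection formula of Lemma~\ref{lem:Srecurrence}(i), feed these into Lemma~\ref{lem:Srecurrence}(ii) with $m=1$, $r=1$ to obtain $S_n^{(1)}(p^2)$ modulo $p^2$, and then propagate to all $m$ and $r$ via Lemma~\ref{lem:Srecurrence}(iii). The remaining work is the same finite bookkeeping the paper carries out explicitly for $8\le n\le 12$.
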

\begin{proof}
Let $p$ be a prime such that $p\ge 17$. By Lemma \ref{lem:R1S1}, \ref{lem:R2S2},
\ref{lem:R3S3}, and Corollary\ \ref{cor:Sm=4},
we have modulo $p$
\begin{align*}
S_8^{(1)}(p) \equiv&\, 0,\
S_8^{(2)}(p) \equiv \frac{8!}{15}B_{p-3}B_{p-5}, \
S_8^{(3)}(p) \equiv -3S_8^{(2)}(p), \
S_8^{(4)}(p) \equiv 4S_8^{(2)}(p),\\
S_9^{(1)}(p) \equiv&\, -8! B_{p-9},\
S_9^{(2)}(p) \equiv  4\cdot 8!B_{p-9}, \\
S_9^{(3)}(p) \equiv&\, -\frac{8!}{18}B_{p-3}^3-\frac{28\cdot 8!}{3}B_{p-9}, \
S_9^{(4)}(p) \equiv \frac{8!}{6}B_{p-3}^3+ 14\cdot 8!B_{p-9},\\
S_{10}^{(1)}(p) \equiv&\, 0,\
S_{10}^{(2)}(p) \equiv  \frac12\cdot 10!
    \left(\frac{B_{p-5}^2}{25}+\frac{2B_{p-3}B_{p-7}}{21}\right) ,\\
S_{10}^{(3)}(p) \equiv &\, -4 S_{10}^{(2)}(p),
S_{10}^{(4)}(p) \equiv 8S_{10}^{(2)}(p)  , \
S_{10}^{(5)}(p) \equiv  -10S_{10}^{(2)}(p) ,\\
S_{11}^{(1)}(p) \equiv &\, -10!B_{p-11},\
S_{11}^{(2)}(p) \equiv 5\cdot 10!B_{p-11},\\
S_{11}^{(3)}(p) \equiv &-\frac{11!}{90}B_{p-3}^2B_{p-5}-15\cdot 10!B_{p-11} ,\\
S_{11}^{(4)}(p) \equiv &\, \frac{2\cdot 11!}{45}B_{p-3}^2B_{p-5}+30\cdot 10!B_{p-11}  , \\
S_{11}^{(5)}(p) \equiv &-\frac{7\cdot 11!}{90}B_{p-3}^2B_{p-5}-42\cdot 10!B_{p-11},\\
S_{12}^{(1)}(p) \equiv &\, 0,\
S_{12}^{(2)}(p) \equiv \frac{12!}{27}B_{p-3}B_{p-9}+\frac{12!}{35}B_{p-5}B_{p-7},\
S_{12}^{(3)}(p) \equiv -5 S_{12}^{(2)}(p),\\
S_{12}^{(4)}(p) \equiv &\,
\frac{40\cdot 12!B_{p-3}B_{p-9}}{81}+ 13\cdot 12!\frac{B_{p-5}B_{p-7}}{35}
+\frac{12!}{24}\frac{B_{p-3}^4}{3^4}, \\
S_{12}^{(5)}(p) \equiv &\,-\frac{70\cdot 12!B_{p-3}B_{p-9}}{81}
-22\cdot 12!\frac{B_{p-5}B_{p-7}}{35}-\frac{12!}{6}\frac{B_{p-3}^4}{3^4},\\
S_{12}^{(6)}(p) \equiv &\, \frac{28\cdot 12!B_{p-3}B_{p-9}}{27}
+26\cdot 12!\frac{B_{p-5}B_{p-7}}{35}+\frac{12!}{4}\frac{B_{p-3}^4}{3^4}.
\end{align*}
Taking $n=8$ and $r=1$ in  Lemma \ref{lem:Srecurrence} (i) and (ii), we get
\begin{align*}
 S_8^{(1)}(p^2)\equiv &\, \frac{2p}7S_8^{(1)}(p)-\frac{p}{21}S_8^{(2)}(p)
 + \frac{2p}{105}S_8^{(3)}(p) -\frac{p}{140}S_8^{(4)}(p)  \\
 \equiv &\,
-\frac{1792}{5}pB_{p-3}B_{p-5} \pmod {p^2}.
\end{align*}
Similarly, taking $9\le n\le 12$ and $r=1$ in Lemma \ref{lem:Srecurrence} (i) and (ii), we see that
\begin{align*}
 S_9^{(1)}(p^2)\equiv&\,  \frac{p}4 S_9^{(1)}(p)-\frac{p}{28}S_9^{(2)}(p)
 + \frac{p}{84}S_9^{(3)}(p) -\frac{p}{140}S_9^{(4)}(p)  \\
 \equiv&\,
-288\left(\frac{761B_{p-9}}9 +\frac{7 B_{p-3}^3}{3^3} \right)p \pmod {p^2},\\
S_{10}^{(1)}(p^2)\equiv&\,  \frac{2p}9 S_{10}^{(1)}(p)-\frac{p}{36}S_{10}^{(2)}(p)
 + \frac{p}{126}S_{10}^{(3)}(p) -\frac{p}{252}S_{10}^{(4)}(p)+\frac{p}{630}S_{10}^{(5)}(p)  \\
\equiv&\,
-194400\left(\frac{B_{p-5}^2}{25}+\frac{2B_{p-3}B_{p-7}}{21}\right)p \pmod {p^2},\\
S_{11}^{(1)}(p^2) \equiv&\,  \frac{p}5 S_{11}^{(1)}(p)-\frac{p}{45}S_{11}^{(2)}(p)
 + \frac{p}{180}S_{11}^{(3)}(p) -\frac{p}{420}S_{11}^{(4)}(p)+\frac{p}{630}S_{11}^{(5)}(p)   \\
\equiv&\,
-174240 \left(\frac{122B_{p-11}}{11}+\frac{3B_{p-3}^2 B_{p-5}}{45}\right)p \pmod {p^2},\\
S_{12}^{(1)}(p^2) \equiv&\,  \frac{2p}{11} S_{12}^{(1)}(p)-\frac{p}{55}S_{12}^{(2)}(p)
 + \frac{2p}{495}S_{12}^{(3)}(p) -\frac{p}{660}S_{12}^{(4)}(p)\\
 & +\frac{p}{1155}S_{12}^{(5)}(p)
 -\frac{p}{2772}S_{12}^{(6)}(p)   \\
\equiv&\,
-47520 \left(\frac{896B_{p-3}B_{p-9}}{27}+\frac{872B_{p-5}B_{p-7}}{35}+\frac{3B_{p-3}^4}{3^4}\right)p \pmod {p^2}.
\end{align*}
Now the corollary follows quickly from Lemma \ref{lem:Srecurrence} (iii).
\end{proof}

\begin{cor} \label{cor:R8-S12}
For all positive integers $m\ge 1$, we have
\begin{align*}
R_8^{(m,1)} &\, = 336 m(m^2+16)(m^2-1) \gb_3\gb_5,  \\
R_9^{(m,1)} &\, = 12\cdot 7!\binom{m+2}{5} \gb_3^3
+72 m(m^6+126 m^4+1869 m^2+3044) \gb_9,  \\
R_{10}^{(m,1)}&\, = 360m(m^2-1)(m^4+71m^2+540)
(2\gb_3 \gb_7+ \gb_5^2),  \\
R_{11}^{(m,1)} &\,= 660\cdot 5! \binom{m+2}{5}(m^2+33) \gb_3^2 \gb_5\\
&\, +110m(m^8+330 m^6+16401 m^4+152900 m^2+193248) \gb_{11} ,\\
R_{12}^{(m,1)} &\, = 55\cdot 9! \binom{m+3}{7} \gb_3^4 \\
&\,+ 11\cdot 6!  \binom{m+1}{3}(m^6+211m^4+6196m^2+32256) \gb_3 \gb_9\\
&\,+11\cdot 6!  \binom{m+1}{3}(m^6+187m^4+6508m^2+31392) \gb_5\gb_7.
\end{align*}
\end{cor}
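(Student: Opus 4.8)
The plan is to read off all five identities from the closed formula of Theorem~\ref{thm:R1}, specialized to $n=8,9,10,11,12$, in the same way Corollary~\ref{cor:Rm=4} was obtained; only the enumeration of the index tuples and the evaluation of the resulting finite sums in $m$ is new. Fix $n$ in this range. Since $\gb_a=0$ for every even $a$, in the formula
\[
R_{n}^{(m,1)} =\frac{n!}{m}
\sum_{\substack{1\le \ell\le \lfloor n/3 \rfloor,\ k_1+\dots+k_\ell=m,\ k_j\ge 1\\ a_1+\dots+a_\ell\vdash n}}
\prod_{j=1}^{\ell-1}\frac{1}{k_1+\cdots+k_j}
\prod_{j=1}^\ell  \left[\binom{k_j+a_j-1}{a_j}-\binom{k_j}{a_j}\right]\gb_{a_j}
\]
only the compositions $(a_1,\dots,a_\ell)$ all of whose parts are odd and $\ge 3$ contribute. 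First I would list them: for $n=8$ they are $(3,5),(5,3)$; for $n=9$, $(9)$ and $(3,3,3)$; for $n=10$, $(3,7),(7,3)$ and $(5,5)$; for $n=11$, $(11)$ and the three orderings of $(3,3,5)$; for $n=12$, $(3,9),(9,3),(5,7),(7,5)$ and $(3,3,3,3)$. No other $\ell\le\lfloor n/3\rfloor$ gives an all-odd composition of $n$, since a sum of an odd (resp.\ even) number of odd parts is odd (resp.\ even).

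Next, for each surviving tuple I would evaluate the inner sum over $k_1+\dots+k_\ell=m$. Writing $g_a(k):=\binom{k+a-1}{a}-\binom{k}{a}$, one notes that $k$ divides each of the two binomial coefficients, so $g_a(k)/k$ is a polynomial in $k$ of degree $a-2$; for instance $g_3(k)/k=\frac{(k+1)(k+2)-(k-1)(k-2)}{6}=k$. This lets the factor $\tfrac1{k_1}$ in the product be absorbed into $g_{a_1}(k_1)$, so that every inner summand becomes a rational function whose only denominators are the remaining $k_1+\dots+k_j$ with $2\le j\le\ell-1$. For $\ell=1$ the inner sum is the single term $g_n(m)=\binom{m+n-1}{n}-\binom{m}{n}$, contributing $\tfrac{n!}{m}\,g_n(m)$, which (since $m\mid g_n(m)$) is a polynomial in $m$ and yields the coefficients of $\gb_9$ and $\gb_{11}$. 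For $\ell=2$ the inner sum is a single sum $\sum_{k=1}^{m-1}$ of a polynomial in $k$ with $m$ as a parameter, which by Faulhaber's power-sum formulas is an explicit polynomial in $m$; adding the contributions of the reordered tuples and using $\gb_a\gb_b=\gb_b\gb_a$ produces the coefficients of $\gb_3\gb_5$ (for $n=8$), of $\gb_3\gb_7$ and $\gb_5^2$ (for $n=10$), and of $\gb_3\gb_9$ and $\gb_5\gb_7$ (for $n=12$). Because $R_n^{(m,1)}$ is a well-defined element of $\calA_1$, each of these sums is automatically divisible by $m$, so the prefactor $1/m$ disappears and the coefficient polynomials come out exactly as stated.

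The only mildly delicate computations are the tuples with $\ell\ge3$ — namely $(3,3,3)$ for $n=9$, the orderings of $(3,3,5)$ for $n=11$, and $(3,3,3,3)$ for $n=12$ — where one has to manage the nested denominators $\tfrac1{k_1+\dots+k_j}$. Here the repeated part $3$ (with $g_3(k)/k=k$) and the symmetry among the equal $a_j$ make the cancellations transparent: one sums over the innermost block of the $k_j$ first, checks divisibility by the corresponding partial sum, and iterates, each step yielding a polynomial in the remaining variables, until a triple or quadruple power sum remains that evaluates to a polynomial in $m$; this gives the coefficients of $\gb_3^3$, $\gb_3^2\gb_5$, and $\gb_3^4$. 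As built-in consistency checks, all five formulas reduce at $m=1,2,3$ to Lemmas~\ref{lem:R1S1}, \ref{lem:R2S2} and \ref{lem:R3S3} and, at the appropriate values, to Corollary~\ref{cor:Rm=4}, and they agree with the Maple verification for $m,n\le20$ and $p<100$ recorded above.
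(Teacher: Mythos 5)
Your proposal is correct in substance, but it proves the corollary by a genuinely different route from the paper. You specialize Theorem~\ref{thm:R1} directly: enumerate the all-odd compositions of $n$, absorb the $1/k_1$ into $g_{a_1}(k_1)=\binom{k_1+a_1-1}{a_1}-\binom{k_1}{a_1}$, and evaluate the resulting Faulhaber-type sums (this is exactly how the paper obtains Corollary~\ref{cor:Rm=4}, and I verified it reproduces, e.g., the $\gb_9$ and $\gb_3^3$ coefficients of $R_9^{(m,1)}$). The paper instead inverts the inclusion--exclusion relation of Lemma~\ref{lem:Sm=Rms} to get
\begin{equation*}
R_n^{(m)}(p)\equiv\sum_{a=1}^{n-1}\binom{m+n-1-a}{n-1}S_n^{(a)}(p)\pmod{p},
\end{equation*}
and feeds in the congruences for $S_n^{(a)}(p)$ already tabulated in the proof of Corollary~\ref{cor:S8-S12} (themselves obtained from Lemmas~\ref{lem:R1S1}--\ref{lem:R3S3}, Corollary~\ref{cor:Sm=4}, and the symmetry $S_n^{(a)}\equiv(-1)^nS_n^{(n-a)}$). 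The paper's route packages all of the $m$-dependence into a single binomial coefficient and so avoids power sums and the nested denominators $1/(k_1+\cdots+k_j)$ entirely, at the cost of needing the $S$-side data first; your route is self-contained given Theorem~\ref{thm:R1} but requires the more delicate $\ell\ge 3$ bookkeeping you describe. One justification you give is not valid and should be replaced: well-definedness of $R_n^{(m,1)}$ in $\calA_1$ does \emph{not} force the inner sums to be divisible by $m$, since for fixed $m$ the element $1/m$ is itself a perfectly good (invertible) element of $\calA_1$ (only finitely many primes divide $m$), so $\frac{n!}{m}P(m)$ is well defined whether or not $m\mid P(m)$. Fortunately nothing is lost: the divisibility by $m$ (indeed by $m^2$ in the $\ell=2$ cases) is visible directly from the explicit Faulhaber polynomials, e.g.\ $\sum_{k=1}^{m-1}k(m-k)^2\bigl[k^2+(m-k)^2+10\bigr]=\tfrac{1}{20}m^2(m^2+16)(m^2-1)$ for $n=8$, so you should simply cite the computed polynomial rather than an abstract argument.
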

\begin{proof}
Let $p$ be a prime such that $p\ge 11$.
By Lemma \ref{lem:RmS1} and Corollary\ \ref{cor:S8-S12}, we have
\begin{align*}
R_8^{(m)}(p) \equiv\sum_{a=1}^{7} \binom{m+7-a}{7} S_8^{(a)}(p)
  \equiv \frac{112}{5} m(m^2+16)(m^2-1) B_{p-3}B_{p-5}  \pmod {p}.
\end{align*}
Similarly,
\begin{align*}
R_9^{(m)}(p)&\, \equiv  -\frac{8!}{18}\binom{m+2}{5} B_{p-3}^3
-8m(m^6+126 m^4+1869 m^2+3044) B_{p-9} \pmod {p},\\
R_{10}^{(m)}(p)&\, \equiv  \frac{10!}{10080}m(m^2-1)(m^4+71m^2+540)
\left(\frac{2 B_{p-3} B_{p-7}}{21}+ \frac{B_{p-5}^2}{25} \right) \pmod {p},\\
R_{11}^{(m)}(p)&\,\equiv -88\cdot 5! \binom{m+2}{5}(m^2+33)
B_{p-3}^2 B_{p-5}\\
&\, -10m(m^8+330 m^6+16401 m^4+152900 m^2+193248) B_{p-11} \pmod{p},\\
R_{12}^{(m)}(p)&\,  \equiv \frac{55\cdot 8!}{9} \binom{m+3}{7} B_{p-3}^4\\
&\,+\frac{22\cdot 5!}{9} \binom{m+1}{3}(m^6+211m^4+6196m^2+32256) B_{p-3} B_{p-9}\\
&\,+\frac{66\cdot 4!}{7} \binom{m+1}{3}(m^6+187m^4+6508m^2+31392) B_{p-5} B_{p-7} \pmod{p}.
\end{align*}
The corollary now quickly follows from the definition of $\gb_k$.
\end{proof}

\section{Proof of Proposition \ref{prop:Pg} and Main Theorem}
We first deal with the case $\ga=0$ and rewrite it as a difference of two sums each of which can be computed more easily.

Let $n,\gk, g\in\N$ such that $1\le g\le \min\{\gk-1,n-2\}$. Set $d=n-g-1$.
For any prime $p$, we define
\begin{equation*}
V_{\gk}^{g;p} (n):=\sum_{\substack{0<a_1<\cdots<a_g<\gk\\ 0< b_1\le \cdots\le b_g\le d}}\
\sum_{\substack{0< u_1<\dots<u_d<(\gk-a_g) p
    \\ u_1,u_2,\dots,u_d \in \calP_p}} \frac{1}{u_1\dots u_d u_{b_1}\dots u_{b_g}},
\end{equation*}
and
\begin{align*}
M_{\gk}^{g;p} (n):= &\sum_{\substack{0<a_1<\cdots<a_g<\gk\\ 0< b_1\le \cdots\le b_g\le d}}\,
\sum_{\substack{0< u_1<\dots<u_d<(\gk-a_g) p
    \\ u_1,u_2,\dots,u_d \in \calP_p}} \frac{1}{u_1\dots u_d u_{b_1}\dots u_{b_g}}
    \left(\frac{a_1}{u_{b_1}}+\frac{a_1}{u_{b_1+1}}+ \right.\\
 \cdots  &\left.+\frac{a_1}{u_{b_2}} +\frac{a_2}{u_{b_2}}+\cdots+\frac{a_2}{u_{b_3}}+\frac{a_3}{u_{b_3}}+\cdots+
    \frac{a_{g-1}}{u_{b_g}}+\frac{a_g}{u_{b_g}}+\cdots+ \frac{a_g}{u_d}\right).
\end{align*}

\begin{lem} \label{lem:Vg}
We have
\begin{equation*}
V_{\gk}^{g;p} (n)\equiv (-1)^{g+1}\binom{\gk}{g+1}\binom{n-1}{g}\frac{B_{p-n}}{n} p \pmod{p^2}.
\end{equation*}
\end{lem}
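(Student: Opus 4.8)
The plan is to peel off the auxiliary summation variables $a_i$ and $b_i$, reduce $V_{\gk}^{g;p}(n)$ to a sum of $p$-restricted multiple harmonic sums of weight $w:=n-1$, and then invoke Lemma~\ref{lem:Uab}. Throughout I fix a prime $p\ge n+2$, so that $w\le p-3$ as required by that lemma, and I keep $d=n-g-1$. First I would sum out $a_1,\dots,a_{g-1}$: since they enter only through $0<a_1<\dots<a_{g-1}<a_g$, this contributes the factor $\binom{a_g-1}{g-1}$. Next, the indices $b_1\le\dots\le b_g$ run over all size-$g$ multisets of $\{1,\dots,d\}$, so $\sum_{0<b_1\le\dots\le b_g\le d}(u_{b_1}\cdots u_{b_g})^{-1}=h_g\big(u_1^{-1},\dots,u_d^{-1}\big)$, the complete homogeneous symmetric polynomial of degree $g$. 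Writing $c:=\gk-a_g$, which runs over $1\le c\le\gk-g$, this produces
\[
V_{\gk}^{g;p}(n)=\sum_{c=1}^{\gk-g}\binom{\gk-c-1}{g-1}\,W_c,\qquad
W_c:=\sum_{\substack{0<u_1<\dots<u_d<cp\\ u_1,\dots,u_d\in\calP_p}}\frac{h_g\big(u_1^{-1},\dots,u_d^{-1}\big)}{u_1\cdots u_d}.
\]

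The second step is to evaluate $W_c\pmod{p^2}$. Expanding $h_g$ into monomials shows that $h_g(u_1^{-1},\dots,u_d^{-1})/(u_1\cdots u_d)=\sum_{\bfs}u_1^{-s_1}\cdots u_d^{-s_d}$, the sum over all compositions $\bfs=(s_1,\dots,s_d)$ of $w$ with $d$ positive parts, whence $W_c=\sum_{\bfs}\calH_{cp}^{(p)}(\bfs)$. Now, the unordered sum of Lemma~\ref{lem:Uab} satisfies $U_{0;c}^{(p)}(\bfs)=\sum_{\tau\in\fS_d}\calH_{cp}^{(p)}(s_{\tau(1)},\dots,s_{\tau(d)})$ for each $\bfs$, and summing over the $\binom{w-1}{d-1}$ compositions of $w$ with $d$ parts hits each composition exactly $d!$ times, so $d!\,W_c=\sum_{\bfs}U_{0;c}^{(p)}(\bfs)$. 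By Lemma~\ref{lem:Uab} each summand is $U_{0;c}^{(p)}(\bfs)\equiv(-1)^{d-1}(d-1)!\,\tfrac{cw}{w+1}B_{p-w-1}\,p\pmod{p^2}$, a value that does not depend on $\bfs$; combining this with $\tfrac wd\binom{w-1}{d-1}=\binom wd$ gives
\[
W_c\equiv(-1)^{d-1}\binom wd\,\frac{B_{p-w-1}}{w+1}\,c\,p\pmod{p^2}.
\]

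Finally, the substitution $c\mapsto\gk-1-j$ turns $\sum_{c=1}^{\gk-g}c\binom{\gk-c-1}{g-1}$ into $\sum_j\binom{j}{g-1}\binom{\gk-1-j}{1}=\binom{\gk}{g+1}$ by the Vandermonde-type identity $\sum_j\binom ja\binom{M-j}b=\binom{M+1}{a+b+1}$, so
\[
V_{\gk}^{g;p}(n)\equiv(-1)^{d-1}\binom wd\binom{\gk}{g+1}\frac{B_{p-w-1}}{w+1}\,p\pmod{p^2}.
\]
Since $w=n-1$ and $d=n-g-1$, here $w+1=n$, $B_{p-w-1}=B_{p-n}$, $\binom wd=\binom{n-1}{g}$, and $(-1)^{d-1}=(-1)^{g+1}$ whenever $n$ is odd; if $n$ is even then $p-n$ is odd, so $B_{p-n}=0$ and both sides vanish. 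Hence in all cases this is exactly the asserted congruence.

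I expect the genuinely delicate points to be purely bookkeeping rather than analytic: verifying that summing out the $a_i$ really yields $\binom{a_g-1}{g-1}$ and that the $b_i$ contribute precisely $h_g$ (one must track the range $(0,(\gk-a_g)p)$ carefully), and checking the ordered-to-unordered passage, where repeated parts of $\bfs$ must be seen to cause no discrepancy because the overcounting agrees on both sides. The one substantive caveat is the uniformity in $p$: the argument needs $p\ge n+2$ so that Lemma~\ref{lem:Uab} applies, which is harmless under the standing ``sufficiently large prime'' convention of the paper. The deeper mod-$p^2$ content is entirely delegated to Lemma~\ref{lem:Uab}, so I anticipate no serious obstacle beyond these.
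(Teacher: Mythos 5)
Your proof is correct and follows essentially the same route as the paper: both reduce $V_{\gk}^{g;p}(n)$ to a sum of $p$-restricted MHSs over all compositions of $n-1$ into $d$ parts (you via the complete homogeneous polynomial $h_g$, the paper via an explicit bijection $\rho$), symmetrize to express this through $U_{0;c}^{(p)}$ by the same $d!$-overcounting/orbit--stabilizer argument, apply Lemma~\ref{lem:Uab}, and finish with the same binomial bookkeeping (your Vandermonde step is exactly the paper's Lemma~\ref{lem:ga} identity $\sum a_g=\binom{\gk}{g+1}$). No gaps.
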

\begin{proof}
Let $d=n-g-1$ and $m\in\N$. For each $0< b_1\le \cdots\le b_g\le d$,
we write $\bfb=(b_1,\dots,b_g)$ and define
\begin{equation*}
K^{(p)}_{d;m}(\bfb):=\sum_{\substack{0< u_1<\dots<u_d<m
    \\ u_1,u_2,\dots,u_d\in \calP_p}} \frac{1}{u_1\dots u_d u_{b_1}\dots u_{b_g}}.
\end{equation*}
Let $[d]^g$ be the set of $g$-tuples of integers in $\{1,\dots,d\}$. Let $\DW(d,n-1)\subset \N^d$ be the set of $d$-tuples $\bfs$ of positive integers with $|\bfs|=n-1$. Since every element of $[d]^g$ can be written in the form of $ (1_{s_1-1},2_{s_2-1},\dots,d_{s_d-1})$, we may define a map
\begin{alignat}{4}\label{defn:rho}
    \rho:  && [d]^g \hskip1cm \ &\, \lra \DW(d,n)  \\
   && (1_{s_1-1},2_{s_2-1},\dots,d_{s_d-1})&\,\lmaps  (s_1,\dots,s_d).  \notag
\end{alignat}
It's clear that $\rho$ has an inverse so that it provides a 1-1 correspondence. Moreover,
\begin{equation*}
K^{(p)}_{d;m}(\bfb) = \calH_{m}^{(p)}(\rho (\bfb)).
\end{equation*}
Thus, by the substitution $a_j\to \gk-a_j$ we have
\begin{equation*}
V_{\gk}^{g;p} (n)=\sum_{\substack{0<a_g<\cdots<a_1<\gk\\ \bfb\in [d]^g}} K_{d;a_g p}^{(p)}(\bfb)
=\sum_{\substack{0<a_g<\cdots<a_1<\gk\\ \bfs\in \DW(d,n-1)}}\calH_{a_g p}^{(p)}(\bfs).
\end{equation*}
For each $\bfs\in\DW(d,n-1)$, let $\Gamma_d$ be its permutation group (a symmetry group of $d$ letters), $\Orb(\bfs)$ its orbit
under $\Gamma_d$, and $\Stab(\bfs)$ its stabilizer, i.e., the subgroup of all of the permutations
that fix $\bfs$. It is well-known from group theory that
$|\Orb(\bfs)|\cdot |\Stab(\bfs)|=|\Gamma_d|=d!$. Thus we have
\begin{align}
V_{\gk}^{g;p} (n)=&\, \sum_{\substack{0<a_g<\cdots<a_1<\gk\\ \bfs\in \DW(d,n-1)}}
\frac1{|\Orb(\bfs)|}\sum_{\bft\in\Orb(\bfs)} \calH_{a_g p}^{(p)}(\bft) \notag\\
=&\, \sum_{\substack{0<a_g<\cdots<a_1<\gk\\ \bfs\in \DW(d,n-1)}} \frac1{|\Orb(\bfs)|} \cdot \frac{U_{0;a_g}^{(p)}(\bfs)}{|\Stab(\bfs)|}
= \sum_{\substack{0<a_g<\cdots<a_1<\gk\\ \bfs\in \DW(d,n-1)}} \frac{U_{0;a_g}^{(p)}(\bfs)}{d!}.  \label{equ:calO}
\end{align}
Since $d=n-g-1$ and $B_{p-n}=0$ for even $n$, by Lemma \ref{lem:Uab},
\begin{equation*}
U_{0;a_g}^{(p)}(\bfs)\equiv a_g (-1)^{g+1} (d-1)! (n-1) \frac{B_{p-n}}{n} p  \pmod{p^2}.
\end{equation*}
Noticing that $|\DW(d,n-1)|=\binom{n-2}{d-1}$, we get
\begin{alignat*}{4}
V_{\gk}^{g;p}(n)\equiv &\sum_{0<a_g<\cdots<a_2<a_1<\gk} a_g
(-1)^{g+1} \binom{n-2}{d-1}\frac{n-1}{d} \frac{B_{p-n}}{n} p && \pmod{p^2}\\
\equiv &\sum_{0<a_g<\cdots<a_2<a_1<\gk} a_g
(-1)^{g+1} \binom{n-1}{g} \frac{B_{p-n}}{n} p  &&\pmod{p^2}.
\end{alignat*}
So the lemma follows from \eqref{equ:i=glemma} at once.
\end{proof}

\begin{lem}\label{lem:ga}
For any positive integers $i\le g<\gk$, we have
\begin{equation*}
\sum_{0<a_1<\cdots<a_g<\gk} a_i= i\sum_{a=1}^{\gk-1} \binom{a}{g}.
\end{equation*}
In particular, if $i=g$ then we have
\begin{equation}\label{equ:i=glemma}
  \sum_{0<a_g<\cdots<a_2<a_1<\gk} a_g = \binom{\gk}{g+1}
\end{equation}
\end{lem}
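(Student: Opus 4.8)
The plan is to pass from the sum over strictly increasing tuples to a sum over compositions of $\gk$, where a symmetry argument trivializes the computation. To a subset $\{a_1<\cdots<a_g\}\subseteq\{1,\dots,\gk-1\}$ I associate its \emph{gap vector} $(g_0,g_1,\dots,g_g)$ given by $g_0=a_1$, $g_l=a_{l+1}-a_l$ for $1\le l\le g-1$, and $g_g=\gk-a_g$; this is a bijection onto the set of compositions of $\gk$ into $g+1$ positive parts, and under it the telescoping identity $a_i=g_0+g_1+\cdots+g_{i-1}$ holds. Hence
\[
\sum_{0<a_1<\cdots<a_g<\gk} a_i=\sum_{\substack{g_0+\cdots+g_g=\gk\\ g_0,\dots,g_g\ge 1}}\bigl(g_0+g_1+\cdots+g_{i-1}\bigr).
\]

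Next I would exploit symmetry. Permuting the coordinates $(g_0,\dots,g_g)$ maps the set of compositions of $\gk$ into $g+1$ positive parts bijectively onto itself, so $\sum g_l$ over this set equals a value $c$ independent of $l$. Summing over all $g+1$ indices, and using that there are $\binom{\gk-1}{g}$ such compositions each with total $\gk$, gives $(g+1)c=\gk\binom{\gk-1}{g}$, so $c=\frac{\gk}{g+1}\binom{\gk-1}{g}=\binom{\gk}{g+1}$. Therefore the displayed sum equals $i\,c=i\binom{\gk}{g+1}$, and the hockey-stick identity $\binom{\gk}{g+1}=\sum_{a=g}^{\gk-1}\binom{a}{g}=\sum_{a=1}^{\gk-1}\binom{a}{g}$ (the terms with $a<g$ vanishing) rewrites this as the asserted $\sum_{0<a_1<\cdots<a_g<\gk} a_i=i\sum_{a=1}^{\gk-1}\binom{a}{g}$.

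For \eqref{equ:i=glemma}, note that in $\sum_{0<a_g<\cdots<a_2<a_1<\gk} a_g$ the chosen integers are listed in decreasing order, so the summand $a_g$ is their minimum; relabelling in increasing order this is the case $i=1$ of the general formula, which gives $\binom{\gk}{g+1}$.

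If one prefers to avoid the symmetry step, a purely computational alternative is to fix the value $a$ taken by $a_i$ — choosing $i-1$ smaller and $g-i$ larger elements — so that the sum becomes $\sum_a a\binom{a-1}{i-1}\binom{\gk-1-a}{g-i}$; using $a\binom{a-1}{i-1}=i\binom{a}{i}$ together with the Vandermonde-type identity $\sum_a\binom{a}{i}\binom{\gk-1-a}{g-i}=\binom{\gk}{g+1}$ again yields $i\binom{\gk}{g+1}$. Neither route poses a genuine obstacle; the only points needing care are the bookkeeping in the gap bijection and noticing that the decreasing indexing in \eqref{equ:i=glemma} makes its summand the smallest of the $g$ chosen integers, so the relevant instance of the general identity is the one with the summand in first position.
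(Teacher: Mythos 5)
Your argument is correct, and your primary route is genuinely different from the paper's. The paper fixes $a_i$, observes that the smaller indices can be completed in $\binom{a_i-1}{i-1}$ ways, rewrites $a_i\binom{a_i-1}{i-1}=i\binom{a_i}{i}$, and then disposes of the larger indices by repeated (inductive) application of the hockey-stick identity $\sum_{0<a_j<a_{j+1}}\binom{a_j}{j}=\binom{a_{j+1}}{j+1}$, which is exactly why the answer is left in the form $i\sum_{a=1}^{\gk-1}\binom{a}{g}$. Your second, ``computational'' route is essentially this same argument with the larger indices counted in closed form and the induction replaced by a single Vandermonde-type convolution. Your first route --- the gap-vector bijection onto compositions of $\gk$ into $g+1$ positive parts followed by the symmetry argument $(g+1)c=\gk\binom{\gk-1}{g}$ --- is new relative to the paper and arguably more illuminating: it produces the closed form $i\binom{\gk}{g+1}$ directly and makes the linearity in $i$ transparent, with the hockey-stick identity relegated to a cosmetic rewriting at the end. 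You also correctly resolve the one genuinely confusing point in the statement: in \eqref{equ:i=glemma} the indices are listed in decreasing order, so $a_g$ is the \emph{minimum} of the chosen integers and the relevant instance of the general formula is the one with coefficient $i=1$ (the paper's phrase ``if $i=g$'' refers to the reversed labelling and is easy to misread); your answer $\binom{\gk}{g+1}$ agrees with what is actually used later in the proof of Lemma~\ref{lem:Vg}.
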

\begin{proof} Clearly
\begin{equation*}
\sum_{0<a_1<\cdots<a_i} 1=   \binom{a_i-1}{i-1}=\frac{i}{a_i} \binom{a_i}{i}
\end{equation*}
is the number of ways to choose $i-1$ distinct positive integers from $1,2,\dots, a_i-1$.
The lemma follows quickly from an induction on $g$ by using the well-known identity
\begin{equation*}
\sum_{0<a_i<a_{i+1}} \binom{a_i}{i}=\binom{a_{i+1}}{i+1}.
\end{equation*}
In particular, if $i=g$ then we may take $a_{i+1}=\gk$ to prove \eqref{equ:i=glemma}.
\end{proof}

\begin{lem}\label{lem:Mg}
We have
\begin{equation*}
M_{\gk}^{g;p}(n)\equiv 0 \pmod{p}.
\end{equation*}
\end{lem}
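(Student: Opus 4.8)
The plan is to work modulo $p$ throughout, to carry out the $a$‑summation first (since it is almost independent of the rest of the sum), and then to recognise what remains as a $\Z$‑linear combination of ``symmetrised'' multiple harmonic sums of weight $n$, each of which vanishes modulo $p$ because the underlying power sums do.

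\emph{Step 1 (collapsing the $a$‑sum).} Put $d=n-g-1$. Neither the range $(0,(\gk-a_g)p)$ nor the factor $1/(u_1\cdots u_d\,u_{b_1}\cdots u_{b_g})$ depends on $a_1,\dots,a_{g-1}$, so I would sum these out first. Rewriting the bracket in $M_{\gk}^{g;p}(n)$ as $\sum_{i=1}^{g}a_i\sum_{j=b_i}^{b_{i+1}}u_j^{-1}$ with $b_{g+1}:=d$, Lemma~\ref{lem:ga} (applied with $\gk$ replaced by $a_g$ and $g$ by $g-1$), together with the elementary identities $\sum_{a=g-1}^{a_g-1}\binom{a}{g-1}=\binom{a_g}{g}$ and $a_g\binom{a_g-1}{g-1}=g\binom{a_g}{g}$, shows that after this partial summation every block carries the \emph{same} weight $i$: the bracket becomes $\binom{a_g}{g}\sum_{i=1}^{g}i\sum_{j=b_i}^{b_{i+1}}u_j^{-1}$. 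Hence, with $N:=\gk-a_g$,
\[
M_{\gk}^{g;p}(n)\equiv\sum_{a_g\ge g}\binom{a_g}{g}\,E_N\pmod p,\qquad
E_N:=\sum_{1\le b_1\le\cdots\le b_g\le d}\ \sum_{\substack{0<u_1<\cdots<u_d<Np\\ u_1,\dots,u_d\in\calP_p}}\frac{\displaystyle\sum_{i=1}^{g}i\sum_{j=b_i}^{b_{i+1}}u_j^{-1}}{u_1\cdots u_d\,u_{b_1}\cdots u_{b_g}},
\]
and it suffices to prove $E_N\equiv0\pmod p$ for every $N\ge1$.

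\emph{Step 2 (vanishing of symmetrisations).} For $1\le s\le p-2$ one has $\sum_{0<u<Np,\,p\nmid u}u^{-s}\equiv N\sum_{r=1}^{p-1}r^{-s}\equiv0\pmod p$, since $r^{-s}\equiv r^{\,p-1-s}$ and $p-1\nmid p-1-s$. I would package the consequence as: \emph{for every composition $\bfs$ with $1\le|\bfs|\le p-2$ one has $\sum_{\bft\in\Orb(\bfs)}\calH_{Np}^{(p)}(\bft)\equiv0\pmod p$}, where $\Orb(\bfs)$ is the set of distinct rearrangements of $\bfs$; more generally, any sum of the form $\sum_{\bfs'}\calH_{Np}^{(p)}(\bft,\bfs')$, with a prefix $\bft$ held fixed and $\bfs'$ running over all compositions of the residual weight, is $\equiv0\pmod p$. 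This follows by induction on depth: the quasi‑shuffle (stuffle) product of a single power sum $\sum_u u^{-s}$ with such a sum of smaller depth is exactly a nonnegative‑integer combination of sums of the same two shapes, of depth at most one larger, and the power sum itself is $\equiv0\pmod p$. (For $p$ large we have $n\le p-2$, so all weights occurring here are in range.)

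\emph{Step 3 (assembling $E_N$).} I would exchange the order of summation in $\sum_{i=1}^{g}i\sum_{j=b_i}^{b_{i+1}}u_j^{-1}$: for $j$ not a breakpoint the coefficient of $u_j^{-1}$ equals $\#\{l:b_l\le j\}=\sum_{l=1}^{g}\mathbb 1[b_l\le j]$, with an explicit correction supported on the breakpoints themselves. Summing over $\bfb$ and over $1\le b_1\le\cdots\le b_g\le d$, the inner sum $\sum_{\bfb}(u_{b_1}\cdots u_{b_g})^{-1}$ is the complete homogeneous symmetric polynomial in the $u_k^{-1}$, and — writing $\sum_{j\ge b_l}=\sum_{\text{all }j}-\sum_{j<b_l}$ and using the recursion $h_{m}(x_1,\dots)=x_1h_{m-1}(x_1,\dots)+h_m(x_2,\dots)$ — one finds that $E_N$ is a $\Z$‑linear combination of (i) the full symmetrisation $\sum_{\bft\in\DW(d,n)}\calH_{Np}^{(p)}(\bft)$ and (ii) prefix‑constrained symmetrisations of weight $n$ of the type appearing in Step~2. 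By Step~2 every such piece is $\equiv0\pmod p$; hence $E_N\equiv0\pmod p$, and therefore $M_{\gk}^{g;p}(n)\equiv0\pmod p$.

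\emph{Main obstacle.} The delicate point is the bookkeeping in Step~3. Because the breakpoints satisfy only the weak inequalities $b_1\le\cdots\le b_g$, a single index $u_j$ may carry several of the ``extra'' factors (raising its exponent), and consecutive blocks share their endpoints $b_i$; tracking these while rewriting the $\bfb$‑sum in terms of the symmetric‑function objects of Step~2, and checking that the partially pinned sums that appear are genuinely covered by that induction (possibly after a further inclusion–exclusion reducing them to full symmetrisations), is a finite but fussy computation. When $n$ is odd one can shorten this: the reversal $u_k\mapsto Np-u_{d+1-k}$ carries $E_N$ to $(-1)^n$ times the same sum with each block‑weight $i$ replaced by $g-i$, so adding the two identities shows that $g$ times the ``unit‑weight'' version of $E_N$ is $\equiv0\pmod p$, and that version is directly a combination of the symmetrisations of Step~2; the even case, where this shortcut is unavailable, is exactly where the careful Step~3 bookkeeping is needed.
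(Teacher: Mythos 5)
Your Step 1 is exactly the paper's opening move: the same application of Lemma~\ref{lem:ga} collapses the $a$-sum to a factor $\binom{a}{g}$ and replaces each coefficient $a_i$ by $i$ (this is the paper's equation \eqref{equ:MgExtraTerms}). The first claim of your Step 2 is also fine: the full symmetrization $\sum_{\bft\in\Orb(\bfs)}\calH_{Np}^{(p)}(\bft)$ vanishes mod $p$ in weight $\le p-3$; the paper gets this directly from Lemma~\ref{lem:Uab} rather than by your stuffle induction. The gap is the combination of the second claim of Step 2 with Step 3, which you yourself flag as the ``main obstacle'' and do not carry out. The fallback lemma you intend to use --- that a prefix-constrained symmetrization $\sum_{\bfs'}\calH_{Np}^{(p)}(\bft,\bfs')$ with $\bft$ held fixed is $\equiv 0\pmod p$ --- is false: for $\bft=(1)$ and residual weight $2$ one has $\calH_p(1,2)\equiv B_{p-3}$ while $\calH_p(1,1,1)\equiv 0\pmod p$, so neither the fixed-depth nor the all-depths version vanishes (at $p=5$ both candidates are $\equiv 1$). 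Your stuffle induction cannot prove it, because multiplying a power sum into a partially pinned sum produces contracted terms in which the pinned exponents are altered, so the class of prefix-constrained sums is not closed under that recursion. Consequently, if your Step 3 bookkeeping leaves any genuine prefix-constrained remainder, the proof collapses.

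What makes the lemma true --- and what the paper's proof actually establishes --- is that no such remainder occurs: after the $a$-summation, every $\bfs\in\DW(d,n)$ with $d=n-g-1$ arises with multiplicity exactly $m(\bfs)=g(g+1)/2$, \emph{independent of} $\bfs$, so the inner sum is precisely $\tfrac{g(g+1)}{2}\sum_{\bfs\in\DW(d,n)}\calH_{(\gk-a)p}^{(p)}(\bfs)$, a pure full symmetrization, which vanishes mod $p$ by Lemma~\ref{lem:Uab}. The paper proves the constancy of $m(\bfs)$ by computing the contribution $\mu_j(\bfl)=\bigl(l_1+\cdots+l_{j-1}+\tfrac{l_j-1}{2}\bigr)l_j$ of each component of $\bfl=\bfs-\bfone$ and checking that $\sum_j\mu_j(\bfl)$ is invariant under moving a unit between components; this combinatorial identity is exactly what your outline is missing. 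Your reversal shortcut does not rescue the argument either: writing $E_N=\sum_i iA_i$ and $U=\sum_iA_i$, the relation $E_N\equiv(-1)^n(gU-E_N)$ yields only $gU\equiv 0$ when $n$ is odd, which gives no information about $E_N$ itself; moreover the reversal moves the anchoring of the blocks from the right end of the index range to the left end, so it does not even return a sum of the same shape.
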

\begin{proof} Again we let $d=n-g-1$.
By the definition and Lemma \ref{lem:ga},
\begin{align}
M_{\gk}^{g;p}(n)= & \sum_{\substack{0<a<\gk\\ 1\le b_1\le \cdots\le b_g\le d}}
\binom{a}{g}\sum_{\substack{0< u_1<\dots<u_d<(\gk-a) p
    \\ u_1,u_2,\dots,u_d \in \calP_p}} \frac{1}{u_1\dots u_d u_{b_1}\dots u_{b_g}}
    \left(\frac{1}{u_{b_1}}+\frac{1}{u_{b_1+1}}+\right.         \notag \\
  \cdots  +\frac{1}{u_{b_2}} &\left.+\frac{2}{u_{b_2}}+\cdots+\frac{2}{u_{b_3}}+\frac{3}{u_{b_3}}+\cdots+
    \frac{g-1}{u_{b_g}}+\frac{g}{u_{b_g}}+\cdots+ \frac{g}{u_d}\right).   \label{equ:MgExtraTerms}
\end{align}
Because of the terms in the parenthesis, we see that each $\bfb\in[d]^g$
may produce more than one $p$-restricted MHSs of weight $n$. Hence,
\begin{equation*}
 M_{\gk}^{g;p}(n)= \sum_{ 0<a<\gk}\binom{a}{g} \sum_{\bfs\in \DW(d,n)} m(\bfs) \calH_{(\gk-a)p}^{(p)}(\bfs).
\end{equation*}
We now show that the multiplicity $m(\bfs)=g(g+1)/2$ for all $\bfs$. For simplicity, we set
\begin{equation*}
\bfl=(l_1,\dots,l_d)=(s_1-1,\dots,s_d-1).
\end{equation*}
The idea is to subtract 1 from a component $s_j>1$ of $\bfs$ and consider
the corresponding $\bfb(j)$ using the 1-1 correspondence $\rho$ defined by \eqref{defn:rho}.
Every such $\bfb(j)$ produced will lead to a $p$-restricted MHS $\calH_{(\gk-a)p}^{(p)}(\bfs)$
with some multiplicity due to the possible repetition of
$1/u_j$-term in the parenthesis of \eqref{equ:MgExtraTerms}.
Suppose $s_j\ge 2$. Then we get the corresponding
\begin{equation*}
\bfb(j)=(b_1,\dots,b_g)=(1_{l_1},2_{l_2},\dots,(j-1)_{l_{j-1}}, j_{l_j-1}, (j+1)_{l_{j+1}},\dots,d_{l_d}).
\end{equation*}
Set $t=l_1+\cdots+l_{j-1}.$
Then we see that $b_{t+i}=j$ for all $i=1,\dots,l_j-1$.
So the contribution to the multiplicity of $m(\bfs)$, denoted by $m_j(\bfs)$,
by this particular $\bfb(j)$ is given by the coefficient of $1/u_j$ in the above
(note that $1/u_j$ repeats $l_j$ times with increasing numerators), namely,
\begin{equation*}
m_j(\bfs)=\mu_j(\bfl):=t+\sum_{i=1}^{l_j-1} (t+i)=\Big(l_1+\dots+l_{j-1}+ \frac{l_j-1}{2}\Big) l_j.
\end{equation*}
Remarkably, this is still true even if $s_j=1$, i.e., $l_j=0$, because $\bfb(j)$ doesn't exist in this case
while $m_j(\bfs)=0$ according to the formula.

We now show that $\mu(\bfl)$ only depends on $|\bfl|=n-d=g+1$.
Indeed, let $\bfl'=(l_1-1,\dots,l_{i-1},l_i+1,l_{i+1},\dots,l_d)$ for some $i\ge 2$ and
let $r_j=\mu_j(\bfl)-\mu_j(\bfl')$. If $j=1$, we have
\begin{equation*}
r_1=\Big(\frac{l_1-1}{2}\Big) l_1- \Big(\frac{l_1-2}{2}\Big)(l_1-1)=l_1-1.
\end{equation*}
For $1<j<i$,
\begin{equation*}
r_j=\Big(l_1+l_2+\dots+l_{j-1}+ \frac{l_j-1}{2}\Big) l_j-
\Big(l_1-1+l_2+\dots+l_{j-1}+ \frac{l_j-1}{2}\Big) l_j=l_j.
\end{equation*}
For $j=i$,
\begin{multline*}
r_i=\Big(l_1+l_2+\dots+l_{i-1}+ \frac{l_i-1}{2}\Big) l_i-
\Big(l_1-1+l_2+\dots+l_{i-1}+ \frac{l_i}{2}\Big) (l_i+1)\\
=1-(l_1+\dots+l_{i-1}).
\end{multline*}
For $j>i$,
\begin{equation*}
r_j=\Big(l_1+l_2+\dots+l_{j-1}+ \frac{l_j-1}{2}\Big) l_j-
\Big(l_1-1+l_2+\dots+l_{j-1}+1+\frac{l_j-1}{2}\Big) l_j=0.
\end{equation*}
Therefore
\begin{equation*}
    \mu(\bfl)-\mu(\bfl')=\sum_{j=1}^d\Big( \mu_j(\bfl)-\mu_j(\bfl')\Big)=\sum_{j=1}^d r_j=0.
\end{equation*}
The upshot is that
$m(\bfs)=\sum_{j=1}^d m_j(\bfs)=\sum_{j=1}^d \mu_j((g+1,0,\dots,0))=\mu_1((g+1,0,\dots,0))=g(g+1)/2$
as desired. Consequently, using the idea to derive \eqref{equ:calO}, we see that
\begin{align*}
 M_{\gk}^{g;p}(n)=&\, \frac{g(g+1)}2 \sum_{ 0<a<\gk}\binom{a}{g} \sum_{\bfs\in \DW(d,n)}  \calH_{(\gk-a)p}^{(p)}(\bfs)\\
=&\, \frac{g(g+1)}{2} \sum_{ 0<a<\gk}\binom{a}{g} \sum_{\bfs\in \DW(d,n)}  \frac{U_{0;\gk-a}^{(p)}(\bfs)}{d!}\equiv 0 \pmod{p}
\end{align*}
by Lemma \ref{lem:Uab}.
\end{proof}

\begin{lem}\label{lem:Pg}
We have
\begin{equation*}
P_{\ga;\gk}^{g;p} (n)\equiv P_{0;\gk}^{g;p} (n) \pmod{p^2}.
\end{equation*}
\end{lem}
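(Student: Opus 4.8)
The plan is to reduce $P_{\ga;\gk}^{g;p}(n)$ to $P_{0;\gk}^{g;p}(n)$ by a telescoping argument in $\ga$: it suffices to show $P_{\ga;\gk}^{g;p}(n)\equiv P_{\ga-1;\gk}^{g;p}(n)\pmod{p^2}$ for every $\ga\ge 1$, and then iterate. So fix $\ga\ge 1$ and compare the two sums. In $P_{\ga-1;\gk}^{g;p}(n)$ the indices run over $(\ga-1)p<u_1<\dots<u_{n-1}<(\ga-1+\gk)p$, while in $P_{\ga;\gk}^{g;p}(n)$ they run over $\ga p<u_1<\dots<u_{n-1}<(\ga+\gk)p$, with the same $p$-gap and $p$-coprimality conditions. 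First I would translate the index range of $P_{\ga;\gk}^{g;p}(n)$ back by $p$, substituting $u_j\mapsto u_j+p$ (equivalently, writing each $u_j$ as $v_j+p$ with $(\ga-1)p<v_1<\dots<v_{n-1}<(\ga-1+\gk)p$). The $p$-gap conditions $p\mid(u_{b_i}-u_{b_i-1})$ and the conditions $u_j\in\calP_p$ are invariant under this shift, so only the reciprocals $1/u_j=1/(v_j+p)$ change.

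Next I would expand each factor $1/(v_j+p)\equiv (1/v_j)(1-p/v_j)\pmod{p^2}$, since $v_j\in\calP_p$. Multiplying out the $n-1$ factors and discarding all terms of order $p^2$, we get
\begin{equation*}
\frac{1}{u_1\cdots u_{n-1}}\equiv \frac{1}{v_1\cdots v_{n-1}}\left(1-p\sum_{j=1}^{n-1}\frac{1}{v_j}\right)\pmod{p^2}.
\end{equation*}
Therefore $P_{\ga;\gk}^{g;p}(n)-P_{\ga-1;\gk}^{g;p}(n)$ is congruent modulo $p^2$ to
$$-p\sum_{1<b_1<\dots<b_g<n}\ \sum_{\substack{(\ga-1)p<v_1<\dots<v_{n-1}<(\ga-1+\gk)p\\ v_i\in\calP_p,\ p\mid(v_{b_i}-v_{b_i-1})\,\forall i}}\frac{1}{v_1\cdots v_{n-1}}\sum_{j=1}^{n-1}\frac{1}{v_j},$$
so it suffices to show this weighted sum vanishes modulo $p$. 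At the modulus-$p$ level each $v_j$ with $v_j\in((\ga-1)p,(\ga-1+\gk)p)$ and $p\nmid v_j$ can be replaced by its residue, and the block structure imposed by the $p$-gaps means the $v_j$'s are partitioned into $g+1$ consecutive runs, each lying strictly between two consecutive multiples of $p$; the sum factors over these runs as a product of unordered power sums of the type handled by Lemma~\ref{lem:Uab}. The point is that once we pass to residues modulo $p$, the resulting sum is exactly (a multiple of) $M_{\gk}^{g;p}(n)$ up to the innocuous shift from $\ga$ to $\ga-1$, which does not affect the residue count; alternatively, one recognizes the extra $\sum_j 1/v_j$ factor as precisely the source of the "$a_i/u_{b_i}$"-type terms collected in the definition of $M$, and Lemma~\ref{lem:Mg} gives $M_{\gk}^{g;p}(n)\equiv 0\pmod p$.

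The main obstacle will be bookkeeping: correctly matching the weighted sum above with the combinatorial object $M_{\gk}^{g;p}(n)$, in particular verifying that the extra numerator weights $1,2,\dots,g$ attached to the various $1/u_{b_i}$ blocks in the definition of $M$ arise exactly from expanding $\prod_j 1/(v_j+p)$ across the $g+1$ blocks and keeping track of which block each $v_j$ lies in (the numerator $a_i$ counting how many multiples of $p$ lie to the left). I would organize this by first handling the single-block case $g=0$ (pure $\Xi$-type sum, where the claim is immediate from translation-invariance of $U_{\ga;\gk}^{(p)}$ modulo the $p$-correction) and then bootstrapping to general $g$ by induction on the number of blocks, peeling off the leftmost block at each step. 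Once the weighted sum is identified with $M_{\gk}^{g;p}(n)$ (times a harmless constant), Lemma~\ref{lem:Mg} finishes the telescoping step, and summing over $\ga$ yields $P_{\ga;\gk}^{g;p}(n)\equiv P_{0;\gk}^{g;p}(n)\pmod{p^2}$.
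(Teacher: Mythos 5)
Your reduction step is sound and matches the paper's in spirit: shifting the index range by a multiple of $p$, using $\tfrac1{v_j+p}\equiv\tfrac1{v_j}(1-\tfrac{p}{v_j})\pmod{p^2}$, and noting that the $p$-gap and coprimality conditions are shift-invariant correctly reduces the lemma to showing that the weighted sum
\begin{equation*}
W:=\sum_{1<b_1<\dots<b_g<n}\ \sum_{\substack{v_1<\dots<v_{n-1}\\ \text{(gap conditions)}}}\frac{1}{v_1\cdots v_{n-1}}\sum_{j=1}^{n-1}\frac{1}{v_j}
\end{equation*}
vanishes modulo $p$. (The paper does the same in one stroke, shifting by $\ga p$ rather than telescoping; that difference is immaterial.) The gap is in how you dispose of $W$. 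You assert that $W$ is ``exactly (a multiple of) $M_{\gk}^{g;p}(n)$'' and invoke Lemma~\ref{lem:Mg}. It is not: in $M_{\gk}^{g;p}(n)$ the correction term $1/u_j$ carries the weight $a_i$ recording which $p$-multiple the $j$-th block sits above (and the variables before the first gap carry weight $0$), because $M$ arises from expanding the \emph{block-dependent} shifts $u_j+a_ip$ in the evaluation of $P_{0;\gk}^{g;p}(n)$ itself. Your $W$ comes from a \emph{uniform} shift by $p$, so every $1/v_j$ carries weight $1$. In the paper's notation $W$ is $E_{\gk}^{g;p}(n)+F_{\gk}^{g;p}(n)$, a genuinely different object from $M_{\gk}^{g;p}(n)$, and Lemma~\ref{lem:Mg} does not apply to it.

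Moreover, the vanishing of $E+F$ modulo $p$ is not the routine step your sketch suggests. The inner sums are \emph{ordered} within each block, so individual terms are $p$-restricted MHSs $\calH^{(p)}(\bft)$ with $|\bft|=n$, and these do not individually vanish mod $p$; nor does the sum factor into unordered $U$-sums block by block. What saves the day is that, after summing over all gap positions $\bfb$, each composition $\bft$ occurs with a multiplicity that is constant on permutation orbits (the height $\hht(\bft)$ for the $E$-part, the constant $g$ for the $F$-part), so the orbit--stabilizer identity converts the whole sum into a combination of $U_{0;\gk-a}^{(p)}(\bfs)$, each of which is $\equiv 0\pmod p$ by Lemma~\ref{lem:Uab}. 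This multiplicity computation is the real content of the step and is entirely absent from your argument; without it (or the analogous computation in Lemma~\ref{lem:Mg}, which addresses a different weighting), the proof does not close.
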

\begin{proof} As before we let $d=n-g-1$. Define
\begin{align*}
E_{\gk}^{g;p} (n):=&\,
\sum_{\substack{0<a_1<\cdots<a_g<\gk\\ 1\le b_1\le \cdots\le b_g\le d}}\
 \sum_{\substack{0< u_1<\dots<u_d<(\gk-a_g) p
    \\ u_1,u_2,\dots,u_d \in \calP_p}}
\frac{1}{u_1\dots u_d u_{b_1}\dots u_{b_g}}
    \left(\frac{1}{u_1}+\cdots+\frac{1}{u_d}\right), \\
F_{\gk}^{g;p} (n):=&\,
\sum_{\substack{0<a_1<\cdots<a_g<\gk\\ 1\le b_1\le \cdots\le b_g\le d}}\
 \sum_{\substack{0< u_1<\dots<u_d<(\gk-a_g) p
    \\ u_1,u_2,\dots,u_d \in \calP_p}}
\frac{1}{u_1\dots u_d u_{b_1}\dots u_{b_g}}
    \left(\frac{1}{u_{b_1}}+\cdots+\frac{1}{u_{b_g}}\right).
\end{align*}
Then it is easy to see that
\begin{equation}\label{equ:D=E+F}
P_{0;\gk}^{g;p} (n)-P_{\ga;g}^{\gk} (n;p) \equiv \ga p\Big(E_{\gk}^{g;p} (n)+F_{\gk}^{g;p} (n)\Big) \pmod{p^2}.
\end{equation}
Indeed, in the definition \eqref{equ:defnP} we may replace every $u_j$ by $u_j+\ga p$.
Then by geometric expansion in the $p$-adic integer ring $\Z_p$, we see that
\begin{equation}\label{equ:geomExp}
\frac1{u_j+\ga p}\equiv \frac1{u_j}\left(1-\frac{\ga p}{u_j}\right),\ \frac1{u_j+(\ga+a_i) p}\equiv\frac1{u_j}\left(1-\frac{(\ga+a_i) p}{u_j}\right) \pmod{p^2},
\end{equation}
which quickly imply \eqref{equ:D=E+F}.

We first prove that
\begin{equation}\label{equ:E=0}
E_{\gk}^{g;p} (n)\equiv 0 \pmod{p}.
\end{equation}
By the proof of Lemma \ref{lem:Vg} we see that there is a 1-1 correspondence between $[d]^g$ and $\DW(d,n-1)$,
where $[d]^g$ is the set of $g$-tuples of integers in $\{1,\dots,d\}$ and $\DW(d,n-1)\subset \N^d$
is the set of $d$-tuples $\bfs$ with $|\bfs|=n-1$. Let the height of $\bfs$, denoted by $\hht(\bfs)$,
be the number of components of $\bfs$ which are greater than 1. Let $\DW(d,n,h)$ be the subset of
height $h$ elements of $\DW(d,n)$. Since $n-d=g+1\ge 1$ the height of every element in $\DW(d,n)$ is at least 1.
Define
\begin{alignat*}{4}
    \gl_j:  && \DW(d,n-1)&\, \lra \DW(d,n)  \\
          && (s_1,\dots,s_d) \  &\,\lmaps   (s_1,\dots,s_{j-1},s_j+1,s_{j+1},\dots,s_d).  \notag
\end{alignat*}
It is obvious that the union of the images of $\gl_j$, as a multi-set, covers every element
of $\DW(d,n,h)$ exactly $h$ times. Note further that the set $\DW(d,n,h)$ is invariant under
every permutation of the components of its elements. Using the same idea to derive \eqref{equ:calO}, we get
\begin{align*}
E_{\gk}^{g;p} (n)=&\, \sum_{0<a_1<\cdots<a_g<\gk}\sum_{h=1}^d h\sum_{\bfs\in \DW(d,n,h)} \calH_{(\gk-a_g) p}^{(p)}(\bfs)\\
=&\, \sum_{0<a_1<\cdots<a_g<\gk}\sum_{h=1}^d h\sum_{\bfs\in \DW(d,n,h)}
\frac{U_{0;\gk-a}^{(p)}(\bfs)}{d!}\equiv 0 \pmod{p}
\end{align*}
by Lemma \ref{lem:Uab}.

We now prove that
\begin{equation}\label{equ:F=0}
F_{\gk}^{g;p} (n)\equiv 0 \pmod{p}.
\end{equation}
We modify the idea used in the proof of Lemma \ref{lem:Mg}. Recall that for any $\bfs=(s_1,\dots,s_d)\in \DW(d,n-1)$, we set
$\rho^{-1}(\bfs)=(1_{l_1},2_{l_2},\dots,d_{l_d})$ where $l_j=s_j-1$ for all $j=1,\dots,d$. So we argue similarly
as in the proof of Lemma \ref{lem:Mg} and see that
\begin{equation*}
F_{\gk}^{g;p}(n)= \sum_{0<a_1<\cdots<a_g<\gk} \sum_{\bfs\in \DW(d,n)} m(\bfs) \calH_{(\gk-a_g)p}^{(p)}(\bfs),
\end{equation*}
where the multiplicity
\begin{equation*}
m(\bfs)=l_1+l_2+\dots+l_d=g
\end{equation*}
which is independent of $\bfs$. Thus
\begin{align*}
F_{\gk}^{g;p}(n)=&\, \sum_{0<a_1<\cdots<a_g<\gk} g\sum_{\bfs\in \DW(d,n)}  \calH_{(\gk-a_g)p}^{(p)}(\bfs) \\
= &\, \sum_{0<a_1<\cdots<a_g<\gk} g\sum_{\bfs\in \DW(d,n)}
\frac{U_{0;\gk-a}^{(p)}(\bfs)}{d!}\equiv 0 \pmod{p}
\end{align*}
by Lemma \ref{lem:Uab}.

Finally, the lemma follows from \eqref{equ:D=E+F}, \eqref{equ:E=0} and \eqref{equ:F=0}.
\end{proof}

We are now ready to prove Proposition~\ref{prop:Pg}. By the definition, we have
\begin{align*}
P_{0;\gk}^{g;p} (n)=&\, \sum_{\substack{0<a_1<\cdots<a_g<\gk\\ 0< b_1\le \cdots\le b_g< n-g}}
\sum_{\substack{0< u_1<\dots<u_d<(\gk-a_g) p
    \\ u_1,u_2,\dots,u_d \in \calP_p}} \frac{1}{u_1u_2\dots u_{b_1}(u_{b_1}+a_1p)(u_{b_1+1}+a_1p)} \\
   &\,  \cdots \frac{1}{(u_{b_2}+a_1p) (u_{b_2}+a_2p) \dots (u_{b_g}+a_{g-1}p)(u_{b_g}+a_gp)\cdots (u_d+a_gp)}  \\
\equiv &\, V_{\gk}^{g;p}(n)-p M_{\gk}^{g;p}(n)  \pmod{p^2}
\end{align*}
by \eqref{equ:geomExp}. Thus by Lemma \ref{lem:Vg} and Lemma \ref{lem:Mg}
\begin{equation*}
P_{0;\gk}^{g;p} (n)\equiv (-1)^{g+1}\binom{\gk}{g+1}\binom{n-1}{g}\frac{B_{p-n}}{n} p \pmod{p^2}.
\end{equation*}
So Proposition~\ref{prop:Pg} follows from Lemma \ref{lem:Pg}.

We can now turn to the proof of the Main Theorem. From Theorem~\ref{thm:R1}
and Lemma~\ref{lem:Sm=Rms} we see that for all $m,n\in\N$, both $R_n^{(m,1)}$ and $S_n^{(m,1)}$ lie in the
sub-algebra $\calB$ of $\calA_1$ generated by $\calA$-Bernoulli numbers. This implies that
$S_n^{(m,2)}$ lies in $p\calB\subset \calA_2$ by Lemma~\ref{lem:Srecurrence} (ii), which in turn
yields \eqref{equ:mainS} and \eqref{equ:mainR} by Lemma~\ref{lem:Srecurrence} (iii)
and Lemma~\ref{lem:RmS1}, respectively.
We can now conclude the proof of our Main Theorem and the paper.

\end{document}